\newtheorem{theorem}{Theorem}[section]
\newtheorem{lemma}[theorem]{Lemma}
\newtheorem{proposition}[theorem]{Proposition}
\newtheorem{remark}[theorem]{Remark}
\newtheorem{corollary}[theorem]{Corollary}
\newcommand{\al}{{\alpha}}
\newcommand{\eps}{\varepsilon}
\newcommand{\la}{\lambda}
\newcommand{\Ac}{\mathfrak{A}}
\newcommand{\sB}{\mathscr{B}}
\newcommand{\sD}{\mathscr{D}}
\newcommand{\p}{{\mathbb P}}
\newcommand{\R}{{\mathbb R}}
\newcommand{\T}{{\mathbb T}}
\newcommand{\cA}{{\cal A}}
\newcommand{\cO}{{\cal O}}
\newcommand{\cD}{{\cal D}}
\newcommand{\hf}{{\frac12}}
\newcommand{\g}{{\nabla}}
\newcommand{\pd}{\partial}
\newcommand{\di}{{\rm div\, }}
\newcommand{\wrt}{{with respect to }}
\newenvironment{declaration}[1]{\trivlist
\item[\hskip \labelsep{\bf #1 }]\ignorespaces}{\endtrivlist}
\newenvironment{proofof}[1]{\begin{declaration}{#1}}{\hfill
$\square$ \end{declaration}}
\newenvironment{proof}{\begin{proofof}{Proof.}}{\end{proofof}}
\begin{document}
\title{A squeezing property and its applications to \\ a description of long time
behaviour in \\  the 3D viscous primitive equations
}
\author{Igor Chueshov\footnote{\small e-mail:
 chueshov@karazin.ua}
  \\
 \\Department of Mechanics and Mathematics, \\
Karazin Kharkov National University, \\ Kharkov, 61022,  Ukraine\\  }

\maketitle

\begin{abstract}
We consider the 3D viscous primitive equations
with periodic boun\-dary conditions. These equations arise in the study of ocean dynamics
and generate a dynamical system in a Sobolev $H^1$ type space.
Our main result
establishes the so-called squeezing property in the Ladyzhenskaya form
for this system.
As a consequence of this property we prove (i) the finiteness of the fractal dimension
of the corresponding  global attractor, (ii) the existence of finite number of
determining modes, and (iii)
 ergodicity of a related random kick
model. All these results provide a new information
concerning long time dynamics  of oceanic motions.
\smallskip
\par\noindent
{\bf Keywords: }  3D viscous primitive equations, periodic boundary conditions,
 global attractor, finite dimension, random kicks, ergodicity.
\par\noindent
{\bf 2010 MSC:}  35Q35, 35B41, 76F25, 76F55
\end{abstract}

\section{Introduction}
We deal with the  3D viscous primitive equations
which arise in geophysical fluid dynamics  for modeling  large scale phenomena
in oceanic motions.
These equations are based on the so-called hydrostatic approximation of the 3D
 Navier-Stokes equations for velocity field $u$ which also contain a rotational (Coriolis) force and
coupled to thermo- and salinity diffusion-transport equations
(see, e.g., the survey~\cite{temam2008-srw} and the references therein).
In this model a small variation of
the density  $\rho$ of the fluid
is taken into account via the buoyancy term only and
has the form
$\rho=\rho_0-\alpha T+\beta s$,
 where  $T$ is the temperature and $s$ denotes
the salinity.
If we take into account one of the factors ($T$ or $s$) or the diffusivities for $T$ and $s$
are equal, then their effect in the dynamics can be represented simply by
the density $\rho$, or equivalently the buoyancy $b=\rho_0-\rho =\alpha T-\beta s$
(see~\cite{temam2008-srw}). This is why we prefer to deal with the variables $u$ and $b$.
In order to simplify our mathematical presentation we impose on $u$ and $b$ the
periodic boundary conditions of the same type as in \cite{petcu} (see also
~\cite{temam2008-srw} and the references therein). Thus we consider
our problem in the domain
 \[
 \cO= (0,L_1)\times (0,L_2)\times (-L_3/2,L_3/2) \subset \R^3
 \]
 and denote the spatial variable in $\cO$ by $\bar{x}=(x,z)=(x_1,x_2,z)\in \cO$.
We  use the notation $\g$, $\di$ and $\Delta$ for the gradient, divergence and  Laplace operators
in two dimensional variable $x=(x_1,x_2)$.
Below we also denote by $\Delta_{x,z}$  the 3D Laplace operator.
The notations $\g_{x,z}$ and $\di_{x,z}$ have a
similar meaning.
\par
We consider
 the following  equations
for the fluid velocity field
\[
u=(v(\bar{x},t);w(\bar{x},t))=(v^1(\bar{x},t);v^2(\bar{x},t);w(\bar{x},t)),~~ \bar{x}=(x,z),
\]
 for the pressure $P=P(\bar x,t)$ and for the buoyancy $b= b(\bar{x},t)$:
\begin{equation}\label{fl.1}
   v_t +(v,\g)v+ w \pd_z v-\nu\Delta v -\nu\pd_{zz} v +f v^{\perp}+\nabla P=G_f\quad {\rm in\quad} \cO
   \times(0,+\infty),
\end{equation}
   \begin{equation}\label{fl.2}
   \di v +\pd_z w=0 \quad {\rm in}\quad \cO
   \times(0,+\infty),
  \end{equation}
   \begin{equation}\label{fl.3}
   \pd_z P=b \quad {\rm in}\quad \cO
   \times(0,+\infty),
  \end{equation}
    \begin{equation}\label{fl.4}
   b_t +(v,\g) b+ w \pd_zb-\nu\Delta b -\nu\pd_{zz} b =G_b\quad {\rm in\quad} \cO
   \times(0,+\infty),
  \end{equation}
 where $\nu>0$ is the dynamical viscosity, $G_f$ and $G_b$ are volume sources,
 and $f$ is the Coriolis parameter.
We denote  $v^{\perp}=(-v^2;v^1)$.
We supplement (\ref{fl.1})--(\ref{fl.4}) with  the periodic  boundary
   conditions imposed  on  $(v;w)$, $P$ and $b$.
   As in   \cite{petcu,temam2008-srw}
   we also assume that
 $v$ and $P$ are even with respect to $z$ and $w$ and $b$ are odd  with respect to $z$
 (hence $w\big|_{z=0}=0$ and $b\big|_{z=0}=0$).
These requirements and also (\ref{fl.2}), (\ref{fl.3}) lead to the following relations
\begin{equation}\label{rep-w}
w(\bar x,t)=-\int_0^{z} \di v(x,\xi,t) d\xi,~~ P(\bar x,t)=p(x,t)+\int_0^{z} b(x,\xi,t) d\xi
\end{equation}
for every $\bar{x}=(x,z)\in \cO$.
It is important (see \cite{CaoTiti}
for the basic discussion) that the pressure $p$ is independent of the vertical variable $z$ and depends
on 2D (horizontal) variable $x$ only. Exploring intensively this observation the authors of the paper
 \cite{CaoTiti} have implemented a new effective approach for proving of the global well-posedness
 for  problems like  (\ref{fl.1})--(\ref{fl.4}), see the discussion below.
\par
The  system in (\ref{fl.1})--(\ref{fl.4}) was
 studied by many authors for the different types of boundary conditions
 (see  the survey \cite{temam2008-srw} and the references
therein). The existence  of week solutions \cite{LTW-ocean}
  and (global) well-posedness of strong solutions were
established (see \cite{CaoTiti} and also \cite{kobelkov,kuka,petcu}).
The existence of a global attractor for the viscous 3D primitive equations
was proved in \cite{ju} (see also the paper \cite{petcu} devoted
to the periodic case).
However,  to the  best of our knowledge,  the question concerning dimension of this attractor
is still open.
We note that the author of \cite{ju} claimed the finiteness of the dimension  in 2007
with the reference to a forthcoming paper which is not published yet.
\par
It should be
also noted that stochastic perturbations of (\ref{fl.1})--(\ref{fl.4})
were studied in \cite{DGTZ-stoch,GH-stoch,GS-stoch} (see also the references in these publications)
and the paper \cite{EG-kick} deals with a random kick forcing of the primitive equations.
\par

Our goal is to show that the system in (\ref{fl.1})--(\ref{fl.4})
possesses an additional regularity and satisfies a squeezing property in the Ladyzhenskaya
form (see \cite{lad1,lad2}).
This property implies several important facts about dynamics of the system and,
in particular, the finiteness of the fractal dimension of the attractor (see Corollary~\ref{co:dim-attr})
and the existence of finite number of determining modes or functionals (Corollary~\ref{co:det}).
The squeezing property also plays an important role in the study of turbulent behaviour
in some classes of dynamical systems with random kicks arising in the fluid dynamics
(see, e.g., \cite{KS-00,KS-book}). In this paper we apply  the squeezing property established
 and also the theory developed in  \cite{KS-book}
to present a result on ergodicity of (\ref{fl.1})--(\ref{fl.4}) forced by random bounded kicks
with appropriately chosen frequency  (see Corollary~\ref{co:erg}).
\par
As we will  see below the
 main ingredient of our argument is the uniform smoothing
 stated in  Theorem~\ref{th-prem1}.
There are two ways to obtain this asymptotic regularity property.
One method involves appropriate (spatial) multipliers
which produce higher kinetic energy type norms in the corresponding
energy balance relation.
Another method consists of two steps. In the first step the standard phase space
estimates for higher time derivatives  of solutions are established. Then, on the second step,
spatial smoothness of solutions is derived  via appropriate elliptic regularity
of the corresponding stationary problem. Both methods requires some compatibility
conditions involving the initial data and the nonlinearity
(see, e.g., \cite{temam-jde82} for a general presentation).
In the periodic case  these compatibility relations are satisfied automatically.
However the first method is a one-step method and looks simpler.
Moreover, it requires minimal hypotheses concerning
time regularity of external forces.
This why it was used in \cite{petcu,temam2008-srw}
and also in this paper. In the case of other boundary conditions (free type as in \cite{CaoTiti}
or mixed free-Dirichlet as in \cite{kuka})
we need rather careful analysis of compatibility
conditions and elliptic properties of the  stationary problem.
 We plan to analyze this issue in our further publications.
\par

It is commonly recognized  (see \cite{CaoTiti,kobelkov,kuka,petcu,temam2008-srw})
that the main difficulty arising in the study of the primitive equations in (\ref{fl.1})--(\ref{fl.4})
is related to the hydrodynamical part. The calculations which should be made to obtain
appropriate bounds for a buoyancy type variable are either
standard or repeat the corresponding argument for the fluid
variables in the simplified form.
This is why below we state our results for the full problem (\ref{fl.1})--(\ref{fl.4}),
but in the proof we concentrate on the fluid equations only.
Moreover, for the transparency of our argument  we even assume that
the  buoyancy equation is absent. This corresponds to the case of
the zero  buoyancy ($b(\bar{x},t)\equiv 0$) which is possible
when $G_b\equiv 0$ and $b(\bar{x},t=0)=0$.
\smallskip
\par
The paper is organized as follows.
\par
Section \ref{sect2} contains some preliminary material. Here we represent the primitive equations
in some standard form, introduce  Sobolev type spaces and
state  the well-known result on  well-posedness of  strong
solutions.
In Section \ref{sect:res}  we state our main results. The key
 outcome  of Theorem \ref{th-prem1}      is the existence of an absorbing ball
 lying in an $H^2$ type space. This allows us not only guarantee the existence of a global
 attractor but also
provides an important step in the proof of the squeezing in Theorem \ref{th-main1}.
In this section we also apply our main results to prove  the finiteness of
the dimension of the global attractor,
the existence of finite number of determining functionals, and ergodicity of a random kicks forced model
generated by (\ref{fl.1})--(\ref{fl.4}).
In Section \ref{sect:prf} we demonstrate the main steps of the proofs
of Theorems \ref{th-prem1} and \ref{th-main1}
in the case when  the buoyancy variable $b$ is absent.

\section{Preliminaries}\label{sect2}

In this section we rewrite
system  (\ref{fl.1})--(\ref{fl.4}) in the canonical form, introduce  appropriate Sobolev
type spaces and  quote the result on the strong well-posedness.
\par
Using the representations in (\ref{rep-w})  we arrive at the system
 \begin{align}\label{fl.1a}
   v_t +(v,\g)v & -\left[\int_0^{z} \di vd\xi \right]\pd_z v -\nu [\Delta v +\pd_{zz} v]
 +f v^{\perp}  \notag
  \\ & =-\nabla \left[ p(x,t)+\int_0^{z} b d\xi\right]
 +G_f\quad {\rm in\quad} \cO
   \times(0,+\infty),
\end{align}
    \begin{equation}\label{fl.4a}
   b_t +(v,\g) b - \left[\int_0^{z} \di vd\xi \right] \pd_zb-\nu [\Delta b +\pd_{zz} b] =G_b\quad {\rm in\quad} \cO
   \times(0,+\infty),
  \end{equation}
  supplied with the conditions:
   \begin{equation}\label{fl.4a-bc1}
 {\rm div}\,\int_{-L_3/2}^{L_3/2} v dz=0;~~
v ~\mbox{is periodic in $\bar{x}$ and even in $z$},~~\int_{\cO} v d\bar{x}=0;
  \end{equation}
and
   \begin{equation}\label{fl.4a-bc2}
  b ~\mbox{is periodic in $\bar{x}$ and odd in $z$}.
   \end{equation}
 We also need to add initial data for $v$ and $b$:
   \begin{equation}\label{fl.4a-id}
 v(0)=v_0,~~~b(0)=b_0.
  \end{equation}
  \par
We denote  by $\dot{H}_{per}^s(\cO)$ the Sobolev space of order $s$
consisting of periodic functions  such that $\int_{\cO} f dx=0$.
To describe fluid velocity fields
we introduce the following spaces with $s=0,1,2$:
\par
\begin{equation*}
V_s=\left\{
v=(v^1;v^2)\in [\dot{H}_{per}^s(\cO)]^2 :\; v^i~\mbox{is even in}~z,\;
 {\rm div}\,\int_{-L_3/2}^{L_3/2} v dz=0
  \right\}.
\end{equation*}
We also denote $H=V_0$ and
 equip   $H$ with $L_2$ type norm $\|\cdot\|$
and denote by $(\cdot,\cdot)$ the corresponding inner product.
The spaces $V_1$ and $V_2$ are endowed  with the norms  $\|\cdot\|_{V_1}= \|\nabla_{x,z}\cdot\|$
and $\|\cdot\|_{V_2}= \|\Delta_{x,z}\cdot\|$.
\par
To describe the buoyancy we need the spaces
\begin{equation*}
E_s=\left\{
b\in \dot{H}_{per}^s(\cO) :\; b~\mbox{is odd in}~z
  \right\},~~~s=0,1,2,
\end{equation*}
equipped with the standard norms. We also denote $W_s=V_s\times E_s$
with the standard  (Euclidean) product norms.
\par
As it was already mentioned, starting with \cite{CaoTiti} the global well-posedness of
the equations  in  \eqref{fl.1a} and \eqref{fl.4a} was studied by many authors~\cite{kobelkov,kuka,petcu,temam2008-srw}.
In this section we quote the result from
 \cite{petcu} (see also \cite{temam2008-srw}) on well-posedness
 of strong solutions under the periodic boundary conditions.

\begin{proposition}[\cite{petcu,temam2008-srw}]\label{pr:titi}
Let $G_f\in V_0$ and $G_b\in E_0$. Then for every $U_0=(v_0;b_0)\in W_1$ problem (\ref{fl.1a})--(\ref{fl.4a-id})
 has a unique strong solution $(v(t);b(t))$:
\begin{equation*}
U(t;U_0)\equiv (v(t);b(t))\in C(\R_+; W_1)\cap L_2(0,T; W_2),~~~\forall T>0.
\end{equation*}
Moreover, this solution continuously depends on time $t$ and initial data $U_0$
and generates a dynamical system $(S_t, W_1)$ with
the phase space $W_1$ and the evolution operator $S_t$
defined by the relation $S_tU_0=U(t;U_0)$.This evolution operator $S_t$
possesses the following  Lipschitz property
\[
\|S_tU-S_tU_*\|_{W_1}\le C_{T,R}\|U-U_*\|_{W_1},~~ t\in [0,T],
\]
for every $T>0$ and $U,U_*\in B(R)\equiv\{U : \|U\|_{W_1}\le R\}$.
\end{proposition}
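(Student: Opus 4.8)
The statement is quoted from \cite{petcu,temam2008-srw}, so I will only outline the strategy. The plan is to build the solution by a Galerkin scheme, derive the a~priori bounds that produce the claimed regularity, and then obtain uniqueness and the Lipschitz dependence from an energy estimate on the difference of two solutions. First I would project \eqref{fl.1a}--\eqref{fl.4a} onto the span of the first $n$ eigenfunctions of the appropriate Stokes-type operator on $V_0$ (and of the Laplacian on $E_0$), getting a locally solvable ODE system for $U_n=(v_n;b_n)$. Testing the $v_n$--equation with $v_n$ and the $b_n$--equation with $b_n$ and using that the operator $(v,\g)\,\cdot + w\pd_z\cdot$ with $w=-\int_0^z\di v\,d\xi$ is formally skew-symmetric (a consequence of \eqref{fl.2}), the transport terms disappear; the $\nabla p$ term drops by the solenoidal constraint and the coupling $\nabla\int_0^z b\,d\xi$ is absorbed by Young's inequality. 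This yields, uniformly in $n$,
\[
\|U_n(t)\|_{W_0}^2 + \nu\int_0^t\|U_n(\tau)\|_{W_1}^2\,d\tau \le C\Big(\|U_0\|_{W_0}^2 + T\big(\|G_f\|^2+\|G_b\|^2\big)\Big),\qquad 0\le t\le T,
\]
so $U_n$ stays bounded in $L_\infty(0,T;W_0)\cap L_2(0,T;W_1)$.

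The genuinely hard step, which has no 2D analogue, is to upgrade this to a uniform-in-time $W_1$ bound. Here I would follow the Cao--Titi approach \cite{CaoTiti}, in the periodic form of \cite{petcu}: decompose the velocity into its barotropic part $\bar v(x)=L_3^{-1}\int_{-L_3/2}^{L_3/2}v\,dz$ and its baroclinic fluctuation $\tilde v=v-\bar v$. The structural fact behind \eqref{rep-w} is that the surface pressure $p=p(x)$ is independent of $z$; hence $\bar v$ solves a 2D Navier--Stokes-type system (with globally controlled $H^1$ norm), while $\tilde v$ satisfies an equation free of the pressure gradient. One first derives an $L^6(\cO)$ estimate for $v$, then $L_\infty(0,T;H^1)$ bounds for $\tilde v$ and $\bar v$ separately, using anisotropic Ladyzhenskaya-type inequalities to dominate $\int (w\,\pd_z v)\cdot(-\Delta_{x,z}v)$ and the other nonlinear terms by the quantities already under control, and closing a Gronwall inequality; the buoyancy enters only through $\nabla\int_0^z b\,d\xi$, handled once $b\in L_\infty(0,T;E_1)$ is obtained by the analogous and simpler estimate on \eqref{fl.4a}. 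Testing once more with the full second-order operator gives $U\in L_2(0,T;W_2)$.

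With these bounds and the equations for $\pd_t U_n$, the Aubin--Lions lemma provides strong convergence of a subsequence in $L_2(0,T;W_1)$, which suffices to pass to the limit in every quadratic nonlinearity; a standard interpolation/continuity argument then yields $U\in C(\R_+;W_1)\cap L_2(0,T;W_2)$. For uniqueness and continuous dependence, take two strong solutions $U,U_*$ with data in $B(R)$, set $\Phi=U-U_*$, subtract the equations, and test with $\Phi$ in $W_0$: the transport terms containing $v$ and $w$ cancel, while the remaining differences of nonlinear terms are estimated by the $W_1$- and $L_2(0,T;W_2)$-norms of $U,U_*$, which depend only on $R$ and $T$, giving $\tfrac{d}{dt}\|\Phi(t)\|_{W_0}^2\le g(t)\|\Phi(t)\|_{W_0}^2$ with $g\in L_1(0,T)$ bounded in terms of $R,T$. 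Gronwall gives uniqueness, and repeating the computation at the $W_1$ level (again invoking the $L_2(0,T;W_2)$ regularity of the solutions) yields the stated Lipschitz estimate.

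As the above makes clear, the only delicate point is the strong $H^1$ bound of the second paragraph: the critical term is $w\pd_z v$ with $w$ a vertical primitive of $\di v$, and the barotropic/baroclinic split exploiting $p=p(x)$ is precisely what lets that estimate close; the rest is routine energy analysis, and the buoyancy equation repeats the fluid argument in a simpler form.
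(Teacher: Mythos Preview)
Your outline is correct and matches the approach of the references the paper cites: the paper itself does not prove this proposition but quotes it from \cite{petcu,temam2008-srw}, noting in the remark that follows that the $W_1$--continuity and Lipschitz properties are filled in by calculations ``almost the same'' as in \cite{ju}. Your Galerkin scheme plus Cao--Titi barotropic/baroclinic splitting (with the $L^6$ estimate for the baroclinic part $\tilde v$, not for $v$ itself---a minor slip in your write-up) is exactly the route taken in those references, and the difference-equation argument you sketch for the Lipschitz bound is precisely the \cite{ju}-style computation the paper alludes to; the same calculation reappears in the paper's Proposition~\ref{pr:lip}, where the difference is tested against $-\Delta_{x,z}u$.
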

\begin{remark}
{\rm
Formally \cite{petcu,temam2008-srw} do not contain the proofs of continuity of
$S_tU$ \wrt $t$ and the Lipschitz property  in $W_1$. However almost the
same calculations as in \cite{ju}
allow us to show these properties.
We also note  that in the case considered
 unique global solvability can be also established
in  smoother classes. For instance, if
 $G_f\in V_{m-1}$, $G_b\in E_{m-1}$ and $U_0=(v_0;b_0)\in W_m$
for some $m\ge 2$ then (see \cite{petcu,temam2008-srw}) the solution $U(t)$
belongs to the class
$C(\R_+; W_m)\cap L_2(0,T; W_{m+1})$ for every $T>0$.
This observation is important in our further considerations due to possibility to
use smooth approximations of solutions in our calculations.
 }
\end{remark}
We conclude this preliminary section with
 the statement of the following well-known uniform Gronwall lemma.
\begin{lemma}[\cite{Temam}] \label{gronwall}
Let $g$, $h$, $y$ be nonnegative locally integrable functions on $[t_0,+\infty)$
such that
\[
\frac{dy}{dt}\le gy+h,
\]
and
\[
\int_t^{t+1} g(s)ds\le a_1,~~\int_t^{t+1} h(s)ds\le a_2,~~\int_t^{t+1} y(s)ds\le a_3
\]
for any $t\ge t_0$, where $a_i>0$  are constant. Then
\[
y(t+1)\le \left( a_3+a_2\right)e^{a_1}~~~\mbox{for any $t\ge t_0$.}
\]
\end{lemma}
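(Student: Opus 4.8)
The plan is to combine the classical differential Gronwall inequality with an averaging over the \emph{left} endpoint of the integration interval. Fix $t\ge t_0$ and let $s\in[t,t+1]$ be arbitrary. Applying the standard Gronwall inequality to $dy/d\tau\le g(\tau)y(\tau)+h(\tau)$ on the interval $[s,t+1]$ gives
\[
y(t+1)\le y(s)\exp\left(\int_s^{t+1}g(\tau)\,d\tau\right)+\int_s^{t+1}h(\tau)\exp\left(\int_\tau^{t+1}g(\sigma)\,d\sigma\right)d\tau .
\]
Since $g\ge 0$, every exponent appearing here is bounded by $\int_t^{t+1}g\le a_1$; since $h\ge 0$, the last integral is then at most $e^{a_1}\int_t^{t+1}h\le e^{a_1}a_2$. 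Hence we obtain the pointwise bound
\[
y(t+1)\le \bigl(y(s)+a_2\bigr)e^{a_1}\qquad\text{for every }s\in[t,t+1].
\]

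The key step — and the only one that is not completely routine — is the observation that the left member of this inequality does not depend on $s$, so we may integrate the inequality in $s$ over $[t,t+1]$. Using $\int_t^{t+1}y(s)\,ds\le a_3$ this yields
\[
y(t+1)=\int_t^{t+1}y(t+1)\,ds\le e^{a_1}\int_t^{t+1}\bigl(y(s)+a_2\bigr)\,ds\le (a_3+a_2)e^{a_1},
\]
which is the assertion. If one prefers not to invoke Gronwall's inequality as a black box, the estimate on $[s,t+1]$ follows directly by noting that $\frac{d}{d\tau}\bigl[y(\tau)\exp(-\int_s^\tau g)\bigr]\le h(\tau)\exp(-\int_s^\tau g)$ and integrating from $s$ to $t+1$.

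I expect no genuine obstacle here; the only points that require a word of care are the regularity needed to run the differential Gronwall argument — local integrability of $g,h,y$ together with the absolute continuity of $y$ that is implicit in the hypothesis $dy/dt\le gy+h$ — and the monotonicity of the exponential factors, where nonnegativity of $g$ is used to replace $\int_s^{t+1}g$ and $\int_\tau^{t+1}g$ by the larger quantity $\int_t^{t+1}g\le a_1$, and nonnegativity of $h$ is used to pull the exponential factor out of the $h$-integral.
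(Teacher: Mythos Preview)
Your argument is correct and is precisely the standard proof of the uniform Gronwall lemma; the paper does not supply its own proof but merely quotes the result from \cite{Temam}, where the same averaging-in-$s$ trick is used. There is nothing to add.
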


\section{Results}\label{sect:res}
We start with the following assertion which partially follows
from Proposition~\ref{pr:titi} and is based on the calculations
given in \cite{petcu}.

\begin{theorem}[Smooth Absorbing Ball]\label{th-prem1}
Let $G_f\in V_1$, $G_b\in E_1$ and also $(\pd_zG_f;\pd_zG_b)\in \big[L_6(\cO)\big]^3$.
Then the dynamical
system $(S_t, W_1)$ generated by problem (\ref{fl.1a})--(\ref{fl.4a-id}) is
dissipative and compact\footnote{For the definitions see \cite{BabinVishik}, for instance.}. Moreover,
there exists $R_*>0$ such that the ball $$
\sB=\{ U\in W_2: \|U\|_{W_2}\le R_*\}
$$ is absorbing, i.e.,
for any bounded set $B$ in $W_1$ there exists $t_B$ such that
\[
S_tB\subset \sB ~~~\mbox{for all}~ t\ge t_B.
\]
In this ball $\sB$ there is a forward invariant  absorbing set $\sD$.
\end{theorem}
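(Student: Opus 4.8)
The plan is to argue, as the paper announces, only for the hydrodynamic part — set $b\equiv0$, the buoyancy contributing only terms handled by the same or by simpler estimates — and to bootstrap the dissipative dynamics from $W_1$ to $W_2$ by differentiating the momentum equation in the vertical variable and feeding the resulting differential inequalities into the uniform Gronwall Lemma~\ref{gronwall}. First I would record what is essentially contained in Proposition~\ref{pr:titi} and~\cite{petcu}: testing~\eqref{fl.1a} with $v$ — the combined transport term $(v,\g)v-(\int_0^z\di v\,d\xi)\,\partial_z v$ being energy-conserving because $(v;w)$ is divergence-free in $(x,z)$, and the pressure dropping out because $\di\int_{-L_3/2}^{L_3/2}v\,dz=0$ — gives $\frac{d}{dt}\|v\|^2+\nu\|v\|_{V_1}^2\le C\|G_f\|^2$, hence an absorbing ball $\sB_1$ in $W_1$; testing with $-\Delta_{x,z}v$ and using the Cao--Titi nonlinear bounds~\cite{CaoTiti} gives, after integration over unit time intervals, uniform estimates $\int_t^{t+1}\|v(s)\|_{V_2}^2\,ds\le C_1$ and $\int_t^{t+1}\|v_t(s)\|^2\,ds\le C_1$ valid for all $t\ge t_B$ once the trajectory has entered $\sB_1$, with $C_1$ independent of the bounded set $B$. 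These time-averaged quantities are exactly the hypotheses of the type $\int_t^{t+1}y(s)\,ds\le a_3$ needed to run Lemma~\ref{gronwall}.

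Since $p$ does not depend on $z$, applying $\partial_z$ to~\eqref{fl.1a} eliminates the pressure and, using $\partial_z\int_0^z\di v\,d\xi=\di v$, yields
\[
\partial_t(\partial_z v)+(v,\g)\partial_z v+(\partial_z v,\g)v-(\di v)\,\partial_z v-\left(\int_0^z\di v\,d\xi\right)\partial_{zz}v-\nu\Delta_{x,z}(\partial_z v)+f(\partial_z v)^{\perp}=\partial_z G_f .
\]
Testing this with $|\partial_z v|^4\partial_z v$ and bounding the forcing term by $\|\partial_z G_f\|_{L_6(\cO)}\,\|\partial_z v\|_{L_6}^5$ — this is exactly where the hypothesis $\partial_z G_f\in[L_6(\cO)]^3$ enters, in the spirit of~\cite{CaoTiti} — produces a differential inequality for $\|\partial_z v\|_{L_6}^6$ from which Lemma~\ref{gronwall} gives a uniform bound on $\|\partial_z v(t)\|_{L_6(\cO)}$ for $t\ge t_B+1$. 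With this $L_6$-bound in hand, applying the multiplier method of~\cite{petcu} to the $z$-differentiated equation (testing with $\partial_z v$, then with $-\Delta_{x,z}\partial_z v$) and controlling the remaining cubic terms, such as $\int_\cO(\di v)\,|\partial_z v|^2\,d\bar x$ and $\int_\cO[(\partial_z v,\g)v]\cdot\partial_z v\,d\bar x$, by the barotropic/baroclinic decomposition $v=\bar v+\widetilde v$ of Cao and Titi together with anisotropic Ladyzhenskaya- and Agmon-type inequalities, yields $\frac{d}{dt}\|\partial_z v\|_{V_1}^2\le g\,\|\partial_z v\|_{V_1}^2+h$ with $\int_t^{t+1}g\le a_1$, $\int_t^{t+1}h\le a_2$, while the first step supplies $\int_t^{t+1}\|\partial_z v\|_{V_1}^2\le a_3$ (this is dominated by $\int_t^{t+1}\|v\|_{V_2}^2$); Lemma~\ref{gronwall} then gives a uniform pointwise-in-time bound on $\|\partial_z v(t)\|_{V_1}$, i.e.\ on $\|\partial_{zz}v(t)\|$ and $\|\g\partial_z v(t)\|$.

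It remains to bound the horizontal Hessian. At a fixed $t\ge t_B+1$, read~\eqref{fl.1a} as the 2D elliptic identity $-\nu\Delta v=-v_t-(v,\g)v+(\int_0^z\di v\,d\xi)\,\partial_z v-f v^{\perp}-\g p+G_f+\nu\partial_{zz}v$; every term on the right is then bounded in $L_2(\cO)$ — the nonlinear terms and $\partial_{zz}v$ by the previous step, the barotropic pressure gradient $\g p$ by 2D elliptic estimates for the vertically averaged equation, and $v_t$ by an analogous pointwise-in-time bound from the same circle of multiplier estimates (legitimate after smooth approximation, cf.\ the Remark) — so 2D elliptic regularity in $x$ yields a uniform bound on $\|v(t)\|_{V_2}$. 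This produces $R_*$ and the $W_2$-absorbing ball $\sB$; dissipativity and compactness of $(S_t,W_1)$ follow because $\sB$ absorbs bounded sets and $W_2$ embeds compactly into $W_1$, so $\sB$ is precompact in $W_1$. Finally, choosing $t_*$ so that $S_t$ maps a slightly smaller $W_2$-ball $\sB_0$ of radius $R_0<R_*$ into itself for $t\ge t_*$, the set $\sD=\overline{\bigcup_{t\ge t_*}S_t\sB_0}$ (closure taken in $W_1$) is, by the same a priori bounds, bounded in $W_2$ and — for $R_*$ chosen large enough — contained in $\sB$, is forward invariant by construction, and is absorbing because $\sB_0$ is.

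The main obstacle is the second step. Unlike the 2D Navier--Stokes case, the primitive equations have no $H^1$ energy cancellation, so the cubic terms produced by differentiating $w\,\partial_z v$ must be tamed via the Cao--Titi barotropic/baroclinic splitting and anisotropic Sobolev inequalities; and — this is what turns the conclusion into an assertion about an absorbing ball rather than merely finite-time $H^2$ regularity — one must verify that every coefficient arising in these estimates is integrable over unit time intervals with a constant depending only on the radius of the $W_1$-absorbing ball, which is precisely the hypothesis of Lemma~\ref{gronwall}.
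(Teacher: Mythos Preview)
Your preliminary steps (the $L_2$ energy balance, the Cao--Titi barotropic/baroclinic splitting leading to $H^1$ dissipativity and to $\int_t^{t+1}\|v\|_{V_2}^2\le C$, and the $L_6$ bound on $\partial_z v$ obtained by testing the $z$-differentiated equation against $|\partial_z v|^4\partial_z v$) match the paper exactly. The divergence comes at the final $H^2$ step. The paper does \emph{not} go through the $z$-differentiated equation with the multiplier $-\Delta_{x,z}\partial_z v$, nor through a pointwise bound on $v_t$ followed by elliptic regularity; instead it tests the original momentum equation \eqref{fl.1-main} directly with $(-\Delta_{x,z})^2 v$, obtaining the single differential inequality
\[
\frac{d}{dt}\|\Delta_{x,z}v\|^2+\nu\|(-\Delta_{x,z})^{3/2}v\|^2\le C_1\big[1+\|\Delta_{x,z}v\|^2+\|\partial_z v\|_{L_6}^4\big]\|\Delta_{x,z}v\|^2+C_2\|G_f\|_{V_1}^2,
\]
with the nonlinear commutator terms controlled via the anisotropic Cao--Titi inequality (their Lemma~\ref{le:titi}); the uniform Gronwall lemma then gives the $V_2$ absorbing ball in one stroke. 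Your route is precisely the ``second method'' (time-derivative bounds plus stationary elliptic regularity) that the paper's Introduction describes and then sets aside in favor of the spatial-multiplier method, on the grounds that the latter is a one-step argument and requires no hypotheses on the time regularity of the forces. Your scheme is viable in the periodic setting --- the compatibility issues that would otherwise obstruct the $v_t$ estimate are absent --- but it costs an extra layer (the pointwise $\|v_t\|$ bound, which you gesture at but do not carry out) and the elliptic step must still absorb a factor $\|v\|_{H^2}^{1/2}$ coming from $\|(v,\nabla)v\|$ back into the left-hand side. The paper's choice trades those bookkeeping steps for a single, slightly more delicate, trilinear estimate.
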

We note that the dissipativity and  compactness  of $(S_t, W_1)$  stated in Theorem~\ref{th-prem1}
is known  from \cite{petcu} under slightly weaker conditions on
$G_f$ and $G_b$. However the existence of an absorbing compact set
which is {\em bounded  in} $W_2$
requires additional hypotheses and additional calculations.
\par
\begin{remark}\label{re:zero-force}
{\rm
In the case when $G_f\equiv 0$ and $G_b\equiv 0$
one can prove (see some argument below in Section~\ref{sect3.1}) that
\[
\forall\eps>0,~~\exists \ T(R,\eps):~~ \|S_t U\|_{W_1}\le\eps
~~~\forall t\ge T(R,\eps),~ \|U\|_{W_1}\le R.
\]
This means that in the case of vanishing sources  the zero equilibrium state
is asymptotically stable.
}
\end{remark}

Using the standard results (see, e.g., one of the monographs
\cite{BabinVishik,Chueshov,Temam}) on  the existence of global attractors we can
derive from Theorem~\ref{th-prem1} the following assertion.
\begin{corollary}[Global Attractor]\label{co:exist}
Let the hypotheses of Theorem~\ref{th-prem1} be in force.
Then the dynamical
system $(S_t, W_1)$ generated by problem (\ref{fl.1a})--(\ref{fl.4a-id})
possesses a compact global attractor $\Ac$ which is a bounded
set in $W_2$. Moreover, in the case when $G_b=0$ the subspace $V_1\times \{0\}\subset W_1$
attracts the trajectories with exponential speed and the attractor  $\Ac$
lies in $V_2\times \{0\}$.
\end{corollary}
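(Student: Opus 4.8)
The plan is to obtain the attractor directly from the abstract theory applied to the smooth absorbing ball of Theorem~\ref{th-prem1}, and then to get the refinement for $G_b=0$ from a one‑line energy estimate for the buoyancy component.

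For the existence part, recall that by Proposition~\ref{pr:titi} each map $S_t$ is continuous on $W_1$, while Theorem~\ref{th-prem1} provides a bounded‑in‑$W_2$ absorbing ball $\sB$ together with a forward invariant absorbing set $\sD\subset\sB$. Since $\cO$ is a periodic box, the embedding $W_2\hookrightarrow W_1$ is compact, so $\sD$ (equivalently the closure of $\sB$ in $W_1$) is a compact absorbing set for the continuous semigroup $(S_t,W_1)$. By the classical criterion for the existence of global attractors (see e.g.\ \cite{BabinVishik,Chueshov,Temam}), $\Ac=\om(\sD)$ is then a compact global attractor, and $\Ac\subset\sB$, hence $\Ac$ is bounded in $W_2$.

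Assume now $G_b=0$ and, for $U_0=(v_0;b_0)\in W_1$, write $S_tU_0=(v(t);b(t))$ and $w=-\int_0^{z}\di v\,d\xi$ as in \eqref{rep-w}, so that $\di v+\pd_z w=0$ by \eqref{fl.2}. Multiplying \eqref{fl.4a} by $b$ and integrating over $\cO$, the convective contribution $\int_\cO\big[(v,\g)b+w\pd_zb\big]b\,d\bar x=\hf\int_\cO (v,w)\cdot\g_{x,z}(b^{2})\,d\bar x$ vanishes because $\di_{x,z}(v,w)=0$ and all fields are periodic in $\bar x$, which leaves
\[
\hf\frac{d}{dt}\|b(t)\|^{2}+\nu\|\g_{x,z}b(t)\|^{2}=0.
\]
Since $b$ is periodic and odd in $z$, the Poincar\'e inequality $\|\g_{x,z}b\|^{2}\ge\la_1\|b\|^{2}$ holds with the first eigenvalue $\la_1>0$ of $-\Delta_{x,z}$, whence $\|b(t)\|\le e^{-\nu\la_1 t}\|b_0\|$ for all $t\ge0$. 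For $t\ge t_{U_0}$ the trajectory lies in $\sB$, so $\|b(t)\|_{E_2}\le R_*$ there, and the interpolation $\|b\|_{E_1}^{2}\le C\|b\|_{E_0}\|b\|_{E_2}$ gives $\|b(t)\|_{E_1}\le C\,e^{-\nu\la_1 t/2}$ for large $t$. As $(v(t);0)\in V_1\times\{0\}$, this yields $\dist_{W_1}\big(S_tU_0,V_1\times\{0\}\big)\le\|b(t)\|_{E_1}\to0$ exponentially fast, i.e.\ the asserted exponential attraction. Finally, any point of $\Ac$ lies on a complete trajectory bounded in $W_1$, along which $\|b(t)\|\le e^{-\nu\la_1(t-s)}\|b(s)\|$ for all $s\le t$; letting $s\to-\infty$ forces $b\equiv0$ on that trajectory. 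Hence $\Ac\subset V_1\times\{0\}$, and combined with the $W_2$‑boundedness of $\Ac$ this gives $\Ac\subset V_2\times\{0\}$.

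Everything here is standard once Theorem~\ref{th-prem1} is in hand; the only slightly delicate point is upgrading the exponential decay of $b$ from the $L_2$‑norm, where it is immediate, to the $E_1$‑norm needed for attraction in the $W_1$‑topology, and this is precisely where the $W_2$‑bounded absorbing ball $\sB$ is used.
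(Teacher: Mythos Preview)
Your proof is correct and follows essentially the same route as the paper: the attractor comes from standard theory via the compact embedding $W_2\hookrightarrow W_1$ and the $W_2$-bounded absorbing ball, while for $G_b=0$ the paper likewise multiplies \eqref{fl.4a} by $b$ to get $\|b(t)\|\le e^{-c_0t}\|b_0\|$ and then invokes the dissipativity in Theorem~\ref{th-prem1} to upgrade this to decay in $E_1$. Your use of the interpolation inequality $\|b\|_{E_1}^2\le C\|b\|_{E_0}\|b\|_{E_2}$ together with the $W_2$-absorbing ball, and the complete-trajectory argument for $\Ac\subset V_2\times\{0\}$, make explicit exactly what the paper leaves implicit.
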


\begin{remark}\label{re:sm-atrr}
{\rm
The result on the existence of a global attractor for 3D viscous primitive equations
was known before, see \cite{ju}, and also Remark 2.1 in \cite{petcu} for the periodic
case. Our  improvement is that we state the boundedness of $\Ac$  in  an $H^2$ type Sobolev space.
One can also show that in the case of sufficiently smooth sources $G_f$
and $G_b$ the attractor $\Ac$ possesses additional spatial smoothness.
Using the method presented in \cite{petcu} we can even prove Gevrey regularity of the elements from the
attractor provided the sources possess appropriate smoothness properties.
However we do not pursue these improvements because our main goal is a squeezing property
with application to  dimension and ergodicity.
}
\end{remark}

In the space $W_1$ we consider the bilinear form
\[
a(U,U_*)=\int_\cO \left[ \g_{x,z} v\cdot\g_{x,z} v_* + \g_{x,z} b\cdot\g_{x,z} b_*\right] dxdz,
\]
where $U=(v;b)$ and $U_*=(v_*;b_*)$ are elements from $W_1$.
This form generates a positive self-adjoint operator $A$ with
a discrete spectrum. Due to periodicity of the problem
the corresponding eigenfunctions and eigenvalues can be easily described.
We assume that they are reorganized in such way that
\[
A e_k=\la_k e_k,~~~0<\la_1\le \la_2\le\ldots,~~~\lim_{k\to+\infty}\la_k=+\infty,
\]
and,
moreover, $\{ e_k\}$ is the orthonormal basis in $W_0$.
We denote by $P_N$ the orthoprojector onto  Span$\{e_1,\ldots,e_N\}$
and $Q_N=I-P_N$.
\par
Now we can state our main result.
\begin{theorem}[Squeezing]\label{th-main1}
Let the hypotheses of Theorem~\ref{th-prem1} be in force.
 Then the dynamical
system $(S_t, W_1)$ generated by problem (\ref{fl.1a})--(\ref{fl.4a-id})
on the forward invariant  absorbing set $\sD$
possesses the properties:
\begin{itemize}
\item {\bf Lipschitz property:}
For any $U,U_*\in\sD$ we have
the relation
\begin{equation}\label{lip-s-d}
\|S_tU-S_tU_*\|_{W_1}\le C_{\sD}e^{\alpha_{\sD}t}\|U-U_*\|_{W_1},~~~\forall\, t\ge 0,
\end{equation}
where $C_{\sD}$ and $\alpha_{\sD}$ are positive constants.
\item
{\bf Squeezing property:}
For every $T>0$ and $0<q<1$ there exists $N=N(T,q)$ such that
\begin{equation}\label{sq-ineq}
\|Q_N[S_T U-S_TU_*]\|_{W_1}\le q \|U-U_*\|_{W_1}
\end{equation}
for any $U$ and $U_*$ from the absorbing forward invariant set $\sD$.
\end{itemize}
\end{theorem}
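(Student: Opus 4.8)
The plan is to establish the two claims in turn, deriving both from energy-type estimates for the difference of two solutions on the forward invariant absorbing set $\sD$, where by Theorem~\ref{th-prem1} all trajectories stay bounded in $W_2$. Write $U=(v;b)$, $U_*=(v_*;b_*)$ for two solutions starting in $\sD$, and set $\va=U-U_*$, with components $\psi=v-v_*$ and $\chi=b-b_*$. The difference solves a linear-in-$\va$ system obtained by subtracting the two copies of \eqref{fl.1a}--\eqref{fl.4a}; the nonlinear terms become bilinear expressions involving one factor from $\{v,v_*,b,b_*\}$ (bounded in $V_2$ or $E_2$, hence in $L_\infty$ by Sobolev embedding in dimension three) and one factor from $\va$. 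Because the $W_2$-bound on $\sD$ is uniform, all the coefficients appearing in the $\va$-equation are controlled by a single constant depending only on $\sD$.

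For the Lipschitz property \eqref{lip-s-d}, I would take the $W_1$ inner product of the difference equation with $A\va$ (equivalently, test with $-\Delta_{x,z}\va$ respecting the boundary conditions). Using \eqref{rep-w} to rewrite the vertical velocity term $-[\int_0^z\di v\,d\xi]\pd_z v$ and its counterpart, each trilinear term is estimated by Hölder and the Sobolev embeddings $H^1\hookrightarrow L_6$, $H^2\hookrightarrow L_\infty$ in $\cO\subset\R^3$, absorbing derivatives of $\va$ into $\|\va\|_{W_1}\|\va\|_{V_2}$ via interpolation or Young's inequality and moving the top-order piece $\nu\|\va\|_{V_2}^2$ to the left. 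The pressure term drops by the divergence-free constraint \eqref{fl.4a-bc1} exactly as in the a priori estimates quoted from \cite{petcu}, and the Coriolis term is skew and harmless. This yields a differential inequality $\frac{d}{dt}\|\va\|_{W_1}^2 \le 2\alpha_\sD\|\va\|_{W_1}^2$ with $\alpha_\sD$ depending only on the $W_2$-radius $R_*$ of $\sD$; Gronwall gives \eqref{lip-s-d}. (Alternatively one may simply invoke the Lipschitz statement of Proposition~\ref{pr:titi} on $[0,T]$ together with the fact that $\sD$ is bounded in $W_1$, and iterate; but the direct energy estimate is what feeds into the squeezing step.)

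For the squeezing property \eqref{sq-ineq}, the idea is the classical Ladyzhenskaya splitting: apply $Q_N$ to the difference equation and run an energy estimate for $q_N(t):=\|Q_N\va(t)\|_{W_1}^2$. Testing the $Q_N$-projected equation with $A Q_N\va$ and using $A Q_N \ge \la_{N+1} Q_N$, the dissipative term produces a coercive contribution $\nu\la_{N+1} q_N$ (or $\nu\|Q_N\va\|_{V_2}^2 \ge \nu\la_{N+1} q_N$), while the nonlinear and forcing-like terms, being lower order, are bounded by $C_\sD\|\va\|_{W_1}^2$ plus a small multiple of the top-order norm. One thus obtains a differential inequality of the form $\frac{d}{dt}q_N \le -\nu\la_{N+1} q_N + C_\sD\|\va(t)\|_{W_1}^2$, and combining it with the already-proved bound $\|\va(t)\|_{W_1}^2\le C_\sD e^{2\alpha_\sD t}\|\va(0)\|_{W_1}^2$ and integrating on $[0,T]$ yields
\[
q_N(T)\le e^{-\nu\la_{N+1}T}q_N(0) + \frac{C_\sD}{\nu\la_{N+1}-2\alpha_\sD}\,e^{2\alpha_\sD T}\|\va(0)\|_{W_1}^2
\]
once $\la_{N+1}>2\alpha_\sD/\nu$. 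Since $q_N(0)\le\|\va(0)\|_{W_1}^2$ and $\la_{N+1}\to\infty$, for fixed $T>0$ and any $q\in(0,1)$ we can choose $N=N(T,q)$ large enough that the right-hand side is at most $q^2\|\va(0)\|_{W_1}^2$, which is \eqref{sq-ineq}.

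The main obstacle is the nonlinear estimate in the $W_1$-energy identity: the vertical velocity $w=-\int_0^z\di v\,d\xi$ costs a horizontal derivative, and the term $w\,\pd_z v$ is genuinely borderline in three dimensions, so the bounds must exploit the anisotropic structure (integrating in $z$ first, then in $x$, and using that $v,v_*$ are bounded in $V_2$) in the style of \cite{CaoTiti,petcu}; keeping the top-order norm $\|\va\|_{V_2}$ on the good side with a small enough coefficient is the delicate bookkeeping. Everything else — the pressure cancellation, the Coriolis skew-symmetry, the buoyancy terms (which the paper's convention lets us suppress), and the final choice of $N$ — is routine given the uniform $W_2$-bound on $\sD$ from Theorem~\ref{th-prem1}.
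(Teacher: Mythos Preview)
Your overall strategy matches the paper's: energy estimate on the difference $u=v_*-v_{**}$ against $-\Delta_{x,z}u$ for the Lipschitz bound, then test the difference equation against $Q_NAu$ and exploit the spectral gap. The gap is in your displayed inequality for $q_N=\|A^{1/2}Q_Nu\|^2$. You write
\[
\frac{d}{dt}q_N \le -\nu\la_{N+1}\,q_N + C_\sD\|\va(t)\|_{W_1}^2,
\]
but this is not what the bilinear terms actually give. The dangerous terms are those carrying the vertical velocity, e.g.\ $(w(v_{**})\pd_z u, Q_NAu)$ and $(w(u)\pd_z v_*, Q_NAu)$: since $v_{**}\in V_2$ only yields $w(v_{**})\in H^1$ (not $L_\infty$), and $\pd_z v_*\in H^1$ (not $L_\infty$), you cannot bound these by $C_\sD\|u\|_{V_1}\|Q_NAu\|$. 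The paper's Lemma~\ref{le:tambaca} (and the Cao--Titi anisotropic estimate, Lemma~\ref{le:titi}) give instead a bound of the form $C_{R_*}\|u\|_{H^{3/2}}\|Q_NAu\|$, which after Young and interpolation leaves a residual $\eps\|u\|_{V_2}^2$ with the \emph{full} $u$, not $Q_Nu$; this cannot be absorbed into $\nu\|AQ_Nu\|^2$.

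To close, the paper records in the Lipschitz step (its Proposition~\ref{pr:lip}) not only the pointwise bound you state but also the integrated estimate
\[
\int_0^t\|u(\tau)\|_{V_2}^2\,d\tau\le C_\sD e^{\alpha_\sD t}\|u(0)\|_{V_1}^2,
\]
which your $V_1$ energy inequality does produce (you moved $\nu\|\va\|_{V_2}^2$ to the left) but which you dropped from your stated conclusion. With this in hand, integrating the $Q_N$ inequality gives the extra contribution $\eps\int_0^T e^{-\nu\la_{N+1}(T-\tau)}\|u(\tau)\|_{V_2}^2\,d\tau\le \eps\, C_\sD e^{\alpha_\sD T}\|u(0)\|_{V_1}^2$, and one concludes by first fixing $\eps$ small and \emph{then} choosing $N$ large---a two-parameter choice your one-parameter version does not allow. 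Your final paragraph shows you sensed this issue, but the displayed inequality and the integration that follows it do not implement it.
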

The squeezing property stated in Theorem~\ref{th-main1} allow us to apply
Ladyzhenskaya Theorem on the dimension of invariant sets (see \cite{lad1,lad2}
and also Theorem 8.1 in \cite[Chap.1]{Chueshov}). This yields the following
assertion.
\begin{corollary}[Finite Dimension]\label{co:dim-attr}
Under the hypotheses of Theorem~\ref{th-prem1} the global attractor $\Ac$
of  the dynamical
system $(S_t, W_1)$ generated by problem (\ref{fl.1a})--(\ref{fl.4a-id})
has a finite fractal dimension.
\end{corollary}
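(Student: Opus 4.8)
The plan is to obtain this as an immediate consequence of Theorem~\ref{th-main1} via the abstract Ladyzhenskaya-type criterion for finiteness of the fractal dimension of invariant sets (see \cite{lad1,lad2} and Theorem~8.1 in \cite[Chap.~1]{Chueshov}). First I would record the trivial geometric fact that $\Ac\subset\sD$: by Corollary~\ref{co:exist} the attractor $\Ac$ is compact in $W_1$ and strictly invariant, $S_t\Ac=\Ac$ for all $t\ge0$; since $\Ac$ is bounded in $W_1$ and $\sD$ is an absorbing, forward invariant set (Theorem~\ref{th-prem1}), we get $\Ac=S_t\Ac\subset\sD$ for all sufficiently large $t$, hence $\Ac\subset\sD$. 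Consequently the Lipschitz estimate \eqref{lip-s-d} and the squeezing estimate \eqref{sq-ineq}, which Theorem~\ref{th-main1} provides on the whole of $\sD$, hold in particular on $\Ac$, and $\sD$ is a forward invariant neighbourhood of $\Ac$ on which the single-time maps $S_t$ act.

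Next I would fix $T>0$ (any value, e.g.\ $T=1$) and a contraction factor $q\in(0,1/2)$, say $q=1/4$. The squeezing part of Theorem~\ref{th-main1} then furnishes an integer $N=N(T,q)$ with
\[
\|Q_N\big(S_T U-S_T U_*\big)\|_{W_1}\le q\,\|U-U_*\|_{W_1}\qquad\text{for all }U,U_*\in\sD,
\]
while \eqref{lip-s-d} shows that $V:=S_T$ is Lipschitz on $\sD$ with constant $L:=C_{\sD}e^{\al_{\sD}T}$. With $M=\Ac$, the map $V$ (which satisfies $VM=M$), the finite-rank orthoprojector $P_N$ and the pair $(L,q)$, $q<1/2$, all the hypotheses of the cited dimension theorem are met, and it yields $\dim_f\Ac<\infty$ together with an explicit bound of the form $\dim_f\Ac\le c(q)\,N\,\big(1+\ln L\big)$, where $c(q)$ depends only on $q$. (If $G_b=0$ one may run the same argument inside the invariant subspace $V_2\times\{0\}$ by Corollary~\ref{co:exist}.)

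The point worth emphasising is that all the analytic substance has already been spent: the existence of a $W_2$-bounded absorbing set (Theorem~\ref{th-prem1}) and, built on it, the Lipschitz and squeezing bounds on $\sD$ (Theorem~\ref{th-main1}). What remains is pure bookkeeping --- checking $\Ac\subset\sD$ and feeding $q$, $T$, $N$, $L$ into the abstract criterion in the form it requires, which is precisely why it is convenient to choose $q<1/2$ and $T$ fixed in advance. Hence the only conceivable difficulty, matching the hypotheses of the Ladyzhenskaya theorem, is essentially vacuous, and the corollary follows.
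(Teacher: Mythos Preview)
Your proposal is correct and follows exactly the route the paper takes: invoke the Ladyzhenskaya dimension theorem (Theorem~8.1 in \cite[Chap.~1]{Chueshov}) with the Lipschitz and squeezing properties supplied by Theorem~\ref{th-main1} on the forward invariant absorbing set $\sD$. You have simply made explicit the bookkeeping (in particular the observation $\Ac\subset\sD$ and the choice of $T$, $q$) that the paper leaves implicit.
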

This means that  the asymptotic dynamics in this model
is finite-dimensional  and topologically can be represented by a compact set in $\R^d$
with appropriate $d<\infty$.
\smallskip\par
The next outcome of  the squeezing property is the existence of a
 finite number of  (asymptotically) determining modes.
We note that this notion was introduced in \cite{FP-det}
for 2D Navier-Stokes system and was studied by many authors
(see the discussion in the recent paper \cite{FJKT-det}).
 For a general theory  of the determining functionals we refer to \cite{CT-det},
see also \cite{Chueshov} and \cite{cl-mem} for a development of this theory
based on the notion of the completeness defect.
\begin{corollary}[Determining Modes]\label{co:det}
Let  the hypotheses of Theorem~\ref{th-prem1} be in force and $N$ be such that \eqref{sq-ineq}
holds for some $T>0$ and $0<q<1$. Then the dynamical system   $(S_t, W_1)$ possesses $N$ determining modes.
This means that the relation
\begin{equation}\label{N-cnv}
\lim_{t\to+\infty}\|P_N[S_t U-S_tU_*]\|_{W_1}=0~~\mbox{for some  $U, U_*\in W_1$}
\end{equation}
implies that $\|S_t U-S_tU_*\|_{W_1}\to 0$ as $t\to+\infty$.
\end{corollary}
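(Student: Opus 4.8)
The plan is to derive Corollary~\ref{co:det} directly from the squeezing property \eqref{sq-ineq} together with the Lipschitz estimate \eqref{lip-s-d}, by a standard telescoping/recurrence argument that converts the one-step contraction on the high modes into eventual decay of the full $W_1$-distance. The first step is to reduce to the absorbing invariant set $\sD$: since $S_tB$ enters $\sB$ for any bounded $B$ and $\sD\subset\sB$ is forward invariant and absorbing, after a finite time $t_*$ both trajectories $S_tU$ and $S_tU_*$ lie in $\sD$; moreover, since $P_N[S_tU-S_tU_*]\to 0$ implies boundedness of $S_tU-S_tU_*$ in $P_NW_1$, and since we already know both orbits are precompact (attracted to $\Ac$), the hypothesis \eqref{N-cnv} makes sense and we may freely restrict attention to $t\ge t_*$. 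So, replacing $U,U_*$ by $S_{t_*}U,S_{t_*}U_*$, we may assume $U,U_*\in\sD$.

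Next I would fix $q\in(0,1)$, say $q=1/2$, and the corresponding $T=T(q)$ and $N=N(T,q)$ from Theorem~\ref{th-main1}. Write $d_k=\|S_{kT}U-S_{kT}U_*\|_{W_1}$ and split $S_{kT}U-S_{kT}U_* = P_N[\cdot]+Q_N[\cdot]$. Applying \eqref{sq-ineq} with the pair $(S_{(k-1)T}U, S_{(k-1)T}U_*)\in\sD\times\sD$ gives $\|Q_N[S_{kT}U-S_{kT}U_*]\|_{W_1}\le q\, d_{k-1}$, hence
\[
d_k \le \|P_N[S_{kT}U-S_{kT}U_*]\|_{W_1} + q\, d_{k-1}.
\]
Denote $\eta_k=\|P_N[S_{kT}U-S_{kT}U_*]\|_{W_1}$; by \eqref{N-cnv} we have $\eta_k\to 0$ as $k\to\infty$ (the $P_N$ part of the convergence at integer multiples of $T$ is a subsequence of a convergent net, hence converges to $0$). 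Iterating the recurrence,
\[
d_k \le q^k d_0 + \sum_{j=1}^{k} q^{k-j}\eta_j,
\]
and the right-hand side tends to $0$: the first term vanishes geometrically, and the second is a convolution of a summable geometric sequence with a null sequence, which is null (a standard discrete Toeplitz/Cesàro-type argument — split the sum at index $k/2$). Therefore $d_k=\|S_{kT}U-S_{kT}U_*\|_{W_1}\to 0$ as $k\to\infty$.

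Finally I would fill the gap between consecutive times $kT$: for $t\in[kT,(k+1)T]$ write $S_tU-S_tU_* = S_{t-kT}(S_{kT}U)-S_{t-kT}(S_{kT}U_*)$ and apply the Lipschitz property \eqref{lip-s-d} on $\sD$, giving $\|S_tU-S_tU_*\|_{W_1}\le C_\sD e^{\alpha_\sD T} d_k$, so the full trajectory distance is controlled by $d_k$ uniformly in $t\in[kT,(k+1)T]$ and hence tends to $0$ as $t\to+\infty$. This proves that the $N$ modes $P_1,\dots,P_N$ are determining. The only point requiring mild care is the justification that \eqref{N-cnv} forces $\eta_k\to 0$ and that the orbits enter $\sD$ in finite time so that \eqref{sq-ineq} and \eqref{lip-s-d} are applicable along the whole tail — everything else is the routine geometric-series estimate. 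I do not expect a genuine obstacle here; the substance of the corollary is entirely contained in Theorem~\ref{th-main1}.
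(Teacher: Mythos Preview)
Your proposal is correct and follows essentially the same route as the paper's proof: derive the one-step inequality $\|S_TU-S_TU_*\|_{W_1}\le q\|U-U_*\|_{W_1}+\|P_N[S_TU-S_TU_*]\|_{W_1}$ from \eqref{sq-ineq}, iterate it to obtain $d_k\le q^k d_0+\sum_{j=1}^k q^{k-j}\eta_j$, and then use the Lipschitz bound \eqref{lip-s-d} to pass from the discrete times $kT$ to all $t$. The paper merely sketches this and defers the last step to a reference, whereas you spell out the Toeplitz-type argument and the interpolation between discrete times; one minor slip is the parenthetical ``say $q=1/2$'' --- the statement already hands you specific $T$, $q$, $N$, so just use those rather than re-choosing them.
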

\begin{proof}
 The relation in (\ref{sq-ineq}) implies
that
\begin{equation}\label{sq-ineq2}
\|S_T U-S_TU_*\|_{W_1}\le q \|U-U_*\|_{W_1} + \|P_N[S_T U-S_TU_*]\|_{W_1}, ~~~0<q<1,
\end{equation}
for any $U$ and $U_*$ from the absorbing forward invariant set $\sD$.
Iterating  (\ref{sq-ineq2}) we obtain that
\begin{equation}\label{sq-ineq3}
\|S_T^n U-S_T^nU_*\|_{W_1}\le q^n \|U-U_*\|_{W_1} +
\sum_{k=1}^{n} q^{n-k}\|P_N[S_T^{k} U-S_T^{k}U_*]\|_{W_1}
\end{equation}
for every $n=1,2,\ldots$, where $U$ and $U_*$ are from  $\sD$.
This inequality and also the Lipschitz property
\eqref{lip-s-d} makes it possible to prove
that  relation \eqref{N-cnv}
implies that $\|S_t U-S_tU_*\|_{W_1}\to 0$ as $t\to+\infty$.
For some details in the abstract situation we refer to the proof of
Theorem~1.3 in \cite[Chap.5]{Chueshov}.
\end{proof}

\begin{remark}
{\rm
Using the same idea as in \cite{cl-mem} it is also possible to
derive from (\ref{sq-ineq3}) the existence of other finite families
of determining functionals.
We also point out the recent paper \cite{petcu2012} which contains the proof
of the existence  of a finite number of determining modes (and also nodes and local volume
averages) by another method.
}
\end{remark}
\par
Now we consider  an  example of an application of the results above to ergodicity of a
random kicks forced model generated by problem (\ref{fl.1a})--(\ref{fl.4a-id}).
We deal with the simplest situation and suppose that
 $G_f\equiv 0$ and $G_b\equiv 0$.
Moreover, we consider a model in some fixed ball from $W_1$
and assume that the frequency of kicks is smaller than a certain  critical value (depending  on the radius of the ball
and the amplitude of the kicks).
We plan to remove all these restrictions and discuss further developments
 in our joint studies with S.Kuksin and A.Shirikyan.

\par
Let $\{\eta_k\}$ be i.i.d.\ random variables in $W_1$.
Assume that the support of the distribution $\cD(\eta_1)$ in $W_1$  contains the origin and is bounded,
i.e., there exists $ R_{kick}>0$ such that
\[
{\rm    supp}\,\cD(\eta_1)\subset B(R_{kick})\equiv\left\{ U\in W_1 :~\| U\|_{W_1}\le R_{kick}\right\}.
\]
Here and below we keep the notation $B(R)$ for the ball with the center at $0$ of the radius $R$
in the space $W_1$.
\par
Let us fix $\widehat{R}> R_{kick}$ and choose $T_c=T(\widehat{R},  R_{kick},\sD)\ge1$ such that
\[
S_{t} B(\widehat{R})\subset B(\widehat{R} -  R_{kick})~~\mbox{and}~~
 S_{t-1}B(\widehat{R})\subset\sD~~\mbox{for all} ~t\ge T_c,
\]
where $\sD$ is the forward invariant absorbing set given by Theorem~\ref{th-prem1}.
This choice of $T_c$ is possible due to Remark~\ref{re:zero-force} and Theorem~\ref{th-prem1}.
\par
Now we fix $T\ge T_c$ and define a Markov chain in $W_1$
by the formula
\begin{equation}\label{chain}
U_k=F_k(U_{k-1})\equiv S_TU_{k-1}+\eta_k,~~ k=1,2,\ldots
\end{equation}
This chain is called  a random  kicks model generated  by
(\ref{fl.1a})--(\ref{fl.4a-id})
(with $G_f\equiv 0$ and $G_b\equiv 0$).
The parameter $T^{-1}$ has meaning of the frequency of the kicks.
We refer to \cite{KS-book} for  motivation and physical importance
of different types  of kick models in the turbulence theory.
\par
It is obvious that if $U_0\in B(\widehat{R})$, then  we also have
that $U_k\in B(\widehat{R})$ for all $k$.
Thus we also have a chain in  $B(\widehat{R})$.
\par
It follows from Proposition~\ref{pr:titi} that
\[
\|S_{T-1}U-S_{T-1}U_*\|_{W_1}\le C_{T,\widehat{R}}\|U-U_*\|_{W_1},~~\mbox{for all}~ U,U_*\in B(\widehat{R}).
\]
Let us fix $0<q<1$ such that $\eta=q C_{T,\widehat{R}}<1$. Since $S_{T-1} U, S_{T-1} U_*\in\sD$,
we can apply Theorem~\ref{th-main1} and choose $N$ such that
 \[
\|Q_N\big[S_{T}U-S_{T}U_*\big]\|_{W_1}\le q
\|S_{T-1}U-S_{T-1}U_*\|_{W_1}\le \eta \|U-U_*\|_{W_1}
\]
for all $U,U_*\in B(\widehat{R})$. Thus we have the squeezing property on the ball $B(\widehat{R})$.
\par
Applying now Theorem~3.2.5\cite{KS-book}\footnote{For the readers convenience we state this theorem in the Appendix.}
and the properties of the mapping $S_t$
established above
we arrive to the following assertion.

\begin{corollary}[Ergodicity]\label{co:erg}
Assume that $P_N\eta_1$ and  $Q_N\eta_1$
are independent and the distribution of $P_N\eta_1$ in the (finite-dimensional)
space $P_NW_1$ has a density $p(U)$  with respect to the Lebesgue measure
$dU$ on $P_NW_1$ such that
\[
\int_{P_NW_1}|p(U+V)-p(U)| dU\le C\|V\|_{W_1},~~~\forall V\in P_NW_1,
\]
where $C$ is a constant. Then  for the chain (\ref{chain})
 there is a unique invariant probability Borel measure $\mu$ on $B(\widehat{R})$.
 Moreover, there is $0<\gamma<1$ such that
\[
\| F_k^*\lambda-\mu\|^*_L\le C(\widehat{R},\lambda)\cdot \gamma^k, ~~ k=1,2,\ldots
\]
for any probability Borel measure $\la$ on $B(\widehat{R})$,
where the Borel measure  $F_k^*\lambda$  and the corresponding dual Lipschitz norm  $\|\cdot\|^*_L$
are defined in the  Appendix.
\end{corollary}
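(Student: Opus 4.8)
The plan is to reduce the statement to a direct application of the abstract ergodicity theorem of Kuksin and Shirikyan (Theorem~3.2.5 in \cite{KS-book}, recalled in the Appendix), once all of its hypotheses have been checked for the chain \eqref{chain} restricted to the forward invariant ball $B(\widehat{R})$. Recall that by the choice of $T\ge T_c$ and the discussion preceding the statement, $B(\widehat{R})$ is invariant for \eqref{chain}, the map $S_T$ is continuous (indeed Lipschitz) on $B(\widehat{R})$ by Proposition~\ref{pr:titi}, and after one step of the chain all trajectories enter the compact forward invariant set $\sD\subset\sB$. The abstract theorem requires three structural ingredients: (a) a regularizing compactness/dissipativity property of the deterministic part $S_T$; (b) a squeezing (stability up to finitely many modes) estimate; and (c) a non-degeneracy and decomposability condition on the noise $\eta_k$ in the finite-dimensional directions $P_NW_1$.

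First I would record (a) and (b). Dissipativity and the existence of the compact forward invariant absorbing set $\sD\subset\sB=\{U:\|U\|_{W_2}\le R_*\}$ are furnished by Theorem~\ref{th-prem1}; since $S_{t-1}B(\widehat{R})\subset\sD$ for $t\ge T_c$, one step of the chain lands in a fixed compact set, which supplies the required smoothing. For the squeezing I would simply reuse the computation displayed just before the corollary: fixing $0<q<1$ with $\eta=qC_{T,\widehat{R}}<1$, we have $S_{T-1}U,S_{T-1}U_*\in\sD$, so Theorem~\ref{th-main1} yields an $N$ with $\|Q_N[S_TU-S_TU_*]\|_{W_1}\le q\|S_{T-1}U-S_{T-1}U_*\|_{W_1}\le\eta\|U-U_*\|_{W_1}$, while on the complementary subspace $P_NW_1$ the Lipschitz bound $\|P_N[S_TU-S_TU_*]\|_{W_1}\le C_{T,\widehat{R}}\|U-U_*\|_{W_1}$ holds trivially. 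This is precisely the dichotomy (finitely many determining modes plus contraction on the tail) on which the coupling construction in \cite{KS-book} rests.

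Next I would verify (c): the assumption that $P_N\eta_1$ and $Q_N\eta_1$ are independent, together with the density $p$ of $P_N\eta_1$ on $P_NW_1$ satisfying $\int_{P_NW_1}|p(U+V)-p(U)|\,dU\le C\|V\|_{W_1}$, is exactly the non-degeneracy condition needed so that two copies of the chain, started from different points, can be coupled in the finitely many directions $P_NW_1$ with a probability bounded below by a quantity controlled by $\|V\|_{W_1}$; the boundedness $\supp\cD(\eta_1)\subset B(R_{kick})$ combined with $S_tB(\widehat{R})\subset B(\widehat{R}-R_{kick})$ guarantees that the coupled chains never leave $B(\widehat{R})$. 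With (a), (b), (c) in hand, \cite[Theorem~3.2.5]{KS-book} applies verbatim and delivers a unique invariant measure $\mu$ supported in $B(\widehat{R})$ together with the exponential convergence $\|F_k^*\lambda-\mu\|^*_L\le C(\widehat{R},\lambda)\gamma^k$ for every probability Borel measure $\lambda$ on $B(\widehat{R})$.

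The only genuinely non-routine point is bookkeeping: one must ensure that the value of $N$ produced by Theorem~\ref{th-main1} (which depends on $q$ and on $T$) is the same $N$ used in the decomposition $\eta_1=P_N\eta_1+Q_N\eta_1$ assumed in the statement, and that the contraction constant $\eta<1$ obtained from the squeezing dominates the Lipschitz blow-up constant $C_{T,\widehat{R}}$ on the finite-dimensional part in the sense required by the abstract theorem; both have already been arranged in the discussion preceding the corollary. I would therefore keep the write-up short, merely listing the verified hypotheses and pointing to the Appendix and to \cite{KS-book}.
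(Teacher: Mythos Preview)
Your proposal is correct and follows essentially the same route as the paper: verify the Kuksin--Shirikyan hypotheses (dissipativity/absorbing, squeezing on the tail modes, and the density condition on $P_N\eta_1$) and then invoke \cite[Theorem~3.2.5]{KS-book}. The one point where the paper differs from your write-up is purely technical: Theorem~\ref{th:app} is stated for a chain on the \emph{whole} Banach space, not on an invariant ball, so the paper does not apply it ``verbatim'' on $B(\widehat{R})$. Instead it introduces the radial extension
\[
\widehat{S}_TU=\begin{cases} S_TU, & \|U\|_{W_1}\le\widehat{R},\\[1mm] S_T\!\big(\widehat{R}\,U/\|U\|_{W_1}\big), & \|U\|_{W_1}>\widehat{R},\end{cases}
\]
which is globally Lipschitz and maps $W_1$ into $B(\widehat{R}-R_{kick})$, so that the literal hypotheses {\bf(A1)}--{\bf(A4)} hold on all of $W_1$; after one step the extended and original chains coincide. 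The paper also remarks (end of the Appendix) that the proof in \cite{KS-book} goes through unchanged on a forward invariant subset, which is exactly what your approach relies on. Either device closes the argument; the extension trick is slightly more self-contained, while your restriction argument is shorter once one accepts that remark.
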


To  apply \cite[Theorem 3.2.5]{KS-book} (see Theorem~\ref{th:app} in the Appendix) we
first  consider
the extension $\widehat{S}_T$ of $S_T$ from  $B(\widehat{R})$
 on the whole space $W_1$ by the formula
\[
\widehat{S}_TU=\left\{ \begin{array}{ll}
S_TU, & \|U\|_{W_1}\le\widehat{R}; \\
[2mm]
S_T\left(\frac{\widehat{R}U}{\|U\|_{W_1}}\right), & \|U\|_{W_1}>\widehat{R}.
\end{array}
\right.
\]
One can see that $\widehat{S}_T$ maps $W_1$ into  $B(\widehat{R}-R_{kick})$.
Thus after the first step in \eqref{chain} we are in  $B(\widehat{R})$
and therefore we have $S_T=\widehat{S}_T$ for $k\ge 2$ in (\ref{chain}).
We also note that $\widehat{S}_T$
is globally Lipschitz (the proof of the latter property
can be found in \cite[p.64]{cl-book}, for instance)
and Conditions {\bf (A1)}--{\bf (A4)} of  Theorem~\ref{th:app}  follow directly.
\smallskip\par
For other consequences  of the Ladyzhenskaya squeezing property for the model considered we refer to 
\cite{Chu-data-as}.

 \section{Proofs}\label{sect:prf}

Our arguments are more or less standard and use the
methods developed in \cite{CaoTiti,ju,petcu} (see also the survey \cite{temam2008-srw}).
In fact  they are some refinement of the calculations known  now from
\cite{CaoTiti}, see also \cite{ju,petcu,temam2008-srw}).
This is why we consider  only key steps in the argument.
Moreover, we
 deal with the reduced system  which appears in the following way.
\par
 If we assume $G_b\equiv 0$ and $b_0=0$, we can take $b\equiv 0$
as a solution to (\ref{fl.4a}). Thus we  arrive
 to the problem: to find
 a (horizontal) fluid velocity field
\[
v(x,t)=(v^1(\bar{x},t);v^2(\bar{x},t)),~~ \bar{x}=(x,z),
\]
and the pressure $p(x,t)$ satisfying the equation
   \begin{align}\label{fl.1-main}
   v_t +(v,\g)v  -\left[\int_0^{z} \di v d\xi \right]\pd_z v -\nu [\Delta v +\pd_{zz} v]
+f v^{\perp} = -\nabla  p
 +G_f
\end{align}
 in  $\cO\times(0,+\infty)$ supplied with  conditions (\ref{fl.4a-bc1})
 and  with the initial data
    \begin{equation}\label{fl.1-main-ini}
v(x,z,0)=v_0(x,z).
  \end{equation}
It follows from Proposition~\ref{pr:titi} that for every  $G_f\in V_1$ and $v\in V_1$
problem (\ref{fl.1-main}),(\ref{fl.1-main-ini}),(\ref{fl.4a-bc1}) has  a unique strong solution
\begin{equation*}
v(t)\in C(\R_+; V_1)\cap L_2(0,T; V_2),~~~\forall T>0.
\end{equation*}
These solutions
 generate a dynamical system $(\tilde S_t,V_1)$ in $V_1$.
Below we prove our main result for this system.
Thus we deal with calculations involving the velocity field $v$, but not the buoyancy.
Moreover using approximation procedure
and the results from \cite{petcu,temam2008-srw}
we can even assume that all our calculations deal with smooth solutions.
\par
We also not that
in the case of full system (\ref{fl.1a})--(\ref{fl.4a-id})  with $G_b=0$ 
the multiplier $b$ in \eqref{fl.4a} leads to the estimate
$\|b(t)\|\le e^{-c_0 t} \|b_0\|$ for $t>0$.
Thus using dissipativity stated in Theorem~\ref{th-prem1} we can conclude that
\[
\|b(t)\|_{V_1}\le C_Be^{-c_0 t},  \quad t>t_B,
\]
for all initial data from  a bounded set $B\subset W_1$.
This means that the subspace $V_1\times \{0\}$ is exponentially attracting 
and implies that the global attractors for (\ref{fl.1a})--(\ref{fl.4a-id})
and \eqref{fl.1-main} are the same.

\subsection{Preliminary a priori estimates}\label{sect3.1}
In order to prove Theorem~\ref{th-prem1} for the system $(\tilde S_t,V_1)$
generated by (\ref{fl.1-main})
we need only to check the existence of absorbing ball in the space $V_2$.
For this we need to repeat and partially refine some calculations in \cite{ju,petcu}.
\subsubsection{A priori estimate in $H$}
If we multiply (\ref{fl.1-main}) by $v$ in $H$, then we obtain
\[
\dfrac{d\|v\|^2}{dt}+\nu\big[\|\g v\|^2+\|\pd_z v\|^2\big]\le c \|G_f\|^2,
\]
which implies that
\begin{equation*}
\|v(t)\|^2+\nu \int_t^{t+1}\big[\|\g v\|^2 +\|\pd_z v\|^2\big]d\tau \le 2e^{-c_0t}\|v(0)\|^2+c_1\|G_f\|^2
\end{equation*}
for all $t\ge 0$.
Thus there exists $R_0>0$ such that for any $R>0$ there is $t_R\ge 0$ such that
\begin{equation}\label{h-est2a}
\|v(t)\|^2+\nu \int_t^{t+1}\big[\|\g v\|^2 +\|\pd_z v\|^2\big] d\tau \le R_0^2~~~\mbox{for all}~~ t\ge t_R
\end{equation}
with $\|v(0)\|\le R$. Moreover, we can choose $R_0^2=\eps+c_1\|G_f\|^2$ with arbitrary $\eps>0$,
in this case $t_R$ also depends on $\eps$.

\subsubsection{Splitting}
Let
\[
\bar{v}=\frac{1}{L_3}\int_{-L_3/2}^{L_3/2} v dz\equiv \langle v\rangle_z~~\mbox{and}~~ \tilde{v}=v-\bar{v}.
\]
As in  \cite{CaoTiti} one can see that the fields $\bar{v}$ and $\tilde{v}$
satisfy the equations
  \begin{align}\label{fl.1-bar}
   \bar{v}_t +(\bar{v},\g)\bar{v}  +
\overline{(\tilde{v},\g)\tilde{v}} +\overline{(\g,\tilde{v})\tilde{v}}
    -\nu \Delta \bar{v}
+f \bar{v}^{\perp} = -\nabla  p
 +\overline{G_f}
\end{align}
with $\di\,\bar{v}=0$ in $\T^2\equiv (0,L_1)\times (0,L_2)$,
and
 \begin{multline}\label{fl.1-tilde}
   \tilde{v}_t +(\tilde{v},\g)\tilde{v}  -\left[\int_0^{z} \di\, \tilde{v} dz \right]\pd_z \tilde{v}
  \\
+ (\tilde{v},\g)\bar{v}+(\bar{v},\g)\tilde{v} -
[\overline{(\tilde{v},\g)\tilde{v}} +\overline{(\g,\tilde{v})\tilde{v}} ]
 \\
   -\nu [\Delta \tilde{v} +\pd_{zz} \tilde{v}]
+f \tilde{v}^{\perp} =
 \widetilde{G_f}
\end{multline}
in $\cO=\T^2\times (-L_3/2, L_3/2)$.
\subsubsection{$H^1$-estimates}

Now we multiply (\ref{fl.1-tilde}) by $|\tilde{v}|^4\tilde{v}$ and integrate over $\cO$.
In the same way as in \cite{CaoTiti,ju} we obtain
\begin{multline*}
\dfrac{d}{dt}\|\tilde{v}\|_{L_6}^6+\nu\int_\cO \left[|\g \tilde{v}|^2+
|\pd_z\tilde{v}|^2\right] |\tilde{v}|^4 d\bar{x} \\
\le C_0\left[ \|\bar{v}\|^2 \|\g \bar{v}\|^2 +\|\g\tilde{v}\|^2\right]  \|\tilde{v}\|_{L_6}^6+
C_1\|G_f\|_{L_6}  \|\tilde{v}\|_{L_6}^5.
\end{multline*}
This implies that
\begin{align*}
\dfrac{d}{dt}\|\tilde{v}\|_{L_6}^2 & \le C_0\left[1+\|v\|^2 \right] \|\g v\|^2   \|\tilde{v}\|_{L_6}^2+
C_1\|G_f\|_{L_6}  \|\tilde{v}\|_{L_6} \\
 & \le C_0\left[ 1+(1+\|v\|^2 ) \|\g v\|^2\right]   \|\tilde{v}\|_{L_6}^2+
C_1\|G_f\|^2_{L_6}.
\end{align*}
Since $\|\tilde{v}\|_{L_6}\le c\big[\|\g v\|^2+ \|\pd_z v\|^2\big]$, we can apply Lemma~\ref{gronwall} and relation
(\ref{h-est2a}) to obtain that
there exists $R_1\ge R_0>0$ such that for any $R>0$ there exists $t^*_R\ge t_R$ such that
\begin{equation}\label{tilde-v-est1}
\|\tilde{v}(t)\|_{L_6} \le R_1~~~\mbox{for all}~~ t\ge t^*_R ~~~\mbox{with}~~
\|v(0)\|\le R.
\end{equation}
Moreover using (\ref{h-est2a}) with $R_0^2=\eps+c_1\|G_f\|^2$ we can conclude that
\[
\forall \eps\in (0,1]:~~
\|\tilde{v}(t)\|^2_{L_6} \le C\left[ \eps +\|G_f\|^2\right] ~~\mbox{for all}~~ t\ge t^*_{R,\eps}
\]
with $\|v(0)\|\le R$.
Using (\ref{tilde-v-est1}) we can also assume  that
\begin{equation}\label{tilde-v-est2}
\int_t^{t+1}\int_\cO \left[|\g \tilde{v}|^2+
|\pd_z\tilde{v}|^2\right] |\tilde{v}|^4 d\bar{x}d\tau  \le C(R_1)~~~\mbox{for all}~~ t\ge t^*_R
\end{equation}
with $\|v(0)\|\le R$.
In the case $G_f\equiv 0$ we can change  $C(R_1)$ into $C\eps$ and $t^*_R$ into $t^*_{R,\eps}$.

\par
Next we multiply (\ref{fl.1-bar}) by $-\Delta \bar{v}$ in $L_2(\T^2)$. As in \cite{CaoTiti}
using the relations
\[
\int_{\T^2} (\bar{v},\g)\bar{v} \Delta \bar{v} dx=0,~~
\int_{\T^2}\g p \Delta \bar{v}dx=0, ~~
\int_{\T^2}\bar{v}^{\perp} \Delta \bar{v}dx=0,
\]
 we have that
 \[
 \hf \frac{d}{dt} \|\g\bar{v}\|^2 +
\nu  \|\Delta \bar{v}\|^2
=
 \int_{\T^2}\left[
\overline{(\tilde{v},\g)\tilde{v}} +\overline{(\g,\tilde{v})\tilde{v}}\right] \Delta \bar{v} dx
 - \int_{\T^2}\overline{G_f}\Delta \bar{v} dx.
\]
This  implies (see \cite{CaoTiti} for some details) that
 \begin{align*}
  \frac{d}{dt} \|\g\bar{v}\|^2 +
\nu  \|\Delta \bar{v}\|^2
\le & C\Big[  \|\g \tilde{v}\|^2  +
 \int_{\cO}|\tilde{v}|^4|\g \tilde{v}|^2 d\bar{x}
+\|{G_f}\|^2 \Big].
\end{align*}
Therefore (\ref{tilde-v-est1}) and (\ref{tilde-v-est2}) give us
that
 there exists $R_{2}>0$ such that for any $R>0$ there is $t^{**}_R\ge t^{*}_R$ such that
\begin{equation}\label{bsr-v-est}
\|\g\bar{v}(t)\|^2+\nu \int_t^{t+1}\|\Delta \bar{v}\|^2 d\tau \le R_2^2~~~\mbox{for all}~~ t\ge t^{**}_R ~~~\mbox{with}~~
\|v(0)\|\le R.
\end{equation}
As above in the case $G_f\equiv 0$ we can take $R^2_2$ of the order $\eps$ (with $t^{**}_R$ depending on
$\eps$).
\par
The next step is the estimate for $v_z$. We first note that $u\equiv v_z$ solves the problem
  \begin{align}\label{vz-eq}
   u_t +(v,\g)u  -\left[\int_0^{z} \di v d\xi \right]\pd_z u
 &  -\nu [\Delta u +\pd_{zz}u] \notag
\\
= & - f u^{\perp}  -(u,\g)v +(\g,v) u+
 \pd_zG_f
\end{align}
in the class of periodic (\textit{odd} in $z$) functions.
Since $u^{\perp} u =0$ and
\[
 \int_{\cO}\left\{(v,\g)u  -\left[\int_0^{z} \di v d\xi \right]\pd_z u\right\} u d\bar{x}=0,
\]
using the multiplier $u$ after integration by parts we obtain that
\[
\hf\dfrac{d\|u\|^2}{dt}+\nu\big[\|\g u\|^2+\|\pd_z u\|^2\big]\le \|\pd_zG_f\| \|u\|
+ \int_\cO |v| |u| |\g u| d\bar{x},
\]
which implies (for some details we refer to \cite{CaoTiti}) that
  \begin{align*}
\dfrac{d\|u\|^2}{dt}+\nu\big[\|\g u\|^2+\|\pd_z u\|^2\big]
&
\le C\left[ \|v\|^4_{L_6}\|u\|^2+ \|\pd_zG_f\|^2 \right] \\
& \le C\left[ \|\tilde{v}\|^4_{L_6} + \|\g \bar{v}\|^4\right] \|u\|^2+ C\|\pd_zG_f\|^2.
\end{align*}
Therefore
 there exists $R_{3}>0$ such that for any $R>0$ there is $t^{(3)}_R\ge t^{**}_R$ such that
\begin{equation*}
\| v_z(t)\|^2+\nu \int_t^{t+1}\left[\|\g v_z\|^2+\|v_{zz}\|^2\right] d\tau \le R_3^2~~~\mbox{for all}~~ t\ge t^{(3)}_R
\end{equation*}
with  $\|v(0)\|\le R$.
Moreover,  $R^2_2\sim \eps$ in the case $G_f\equiv 0$.
\par
Next we estimate $\|\g v\|$. For this we multiply (\ref{fl.1-main}) by $-\Delta v$.
As in \cite{CaoTiti} we obtain
  \begin{multline*}
\dfrac{d\|\g v\|^2}{dt}+\nu\big[\|\Delta v\|^2+\|\g \pd_z v\|^2\big]
\\
\le C\left[ \|v\|^4_{L_6}+ \|\g v\|^2\| v_z\|^2 \right]\|\g v\|^2+ C\|G_f\|^2
\end{multline*}
which implies again that
there exists $R_{4}>0$ such that for any $R>0$ there is $t^{(4)}_R\ge t^{(3)}_R$ such that
\begin{equation}\label{vz-est2}
\| \g v(t)\|^2+\nu \int_t^{t+1}\left[\|\Delta v \|^2+\|\g v_{z}\|^2+\|v_{zz}\|^2\right] d\tau \le R_4^2
\end{equation}
for all $t\ge t^{(4)}_R$ with  $\|v(0)\|\le R$.
Moreover one can see from the analysis above that in the case $G_f\equiv 0$ for any $0<\eps<1$
there exists $t_{R,\eps}>0$ such that
\begin{equation}\label{vz-est2-eps}
\| \g v(t)\|^2+\nu \int_t^{t+1}\left[\|\Delta v \|^2+\|\g v_{z}\|^2+\|v_{zz}\|^2\right] d\tau \le \eps~~~\mbox{for all}~~ t\ge t_{R,\eps}
\end{equation}
provided  $\|v(0)\|\le R$.
\par
The relation in (\ref{vz-est2}) yields that the system $(\tilde{S}_t,V_1)$
is dissipative in $V_1$.
In the case $G_f\equiv 0$ by \eqref{vz-est2-eps} an absorbing
 ball can be chosen with an arbitrary small radius
which implies that the zero equilibrium is asymptotically stable in this case
(as  it is claimed in  Remark~\ref{re:zero-force} when the buoyancy is presented).

\subsection{Completion of the proof of Theorem~\ref{th-prem1}}
To conclude the proof we need to show uniform smoothing of trajectories.
We note that the result for individual trajectories is known from
\cite{petcu} (see also \cite{temam2008-srw}). However in these references
the size of the corresponding ball is not controlled and thus we need some refinement
of that results. This is why below we mainly follow the line of argument in \cite{petcu,temam2008-srw}.
 \par
We multiply (\ref{fl.1-main}) by $(-\Delta_{x,z})^2v$
to get
\begin{align}\label{Delta2-est}
 \hf \frac{d}{dt} \|\Delta_{x,z}v \|^2 +
\nu  \|[-\Delta_{x,z}]^{3/2}v\|^2
= &
 - \int_{\cO}\left[ (v,\g)v  +w(v)\pd_z v\right](-\Delta_{x,z})^2v
 d\bar{x} \notag \\ & + (G_f,(-\Delta_{x,z})^2v),
\end{align}
where $w(v)=w(\bar{x},t)$ is defined  by \eqref{rep-w}.
We also use the facts that
\[
\int_{\cO} v^{\perp} (-\Delta_{x,z})^2vd\bar{x}=\int_{\cO} [\Delta_{x,z}v]^{\perp} \Delta_{x,z}vd\bar{x}=0
\]
and
\[
\int_{\cO} \g p  (-\Delta_{x,z})^2vd\bar{x}=0.
 \]
The general structure of the nonlinear terms in (\ref{Delta2-est}) is given by the formula
\begin{align*}
N(v) \equiv & \int_{\cO}\left[ (v,\g)v  +w(v)\pd_z v\right](-\Delta_{x,z})^2v
 d\bar{x} \notag \\
= & \int_{\cO} \Delta_{x,z}\left[ (v,\g)v  +w(v)\pd_z v\right] \Delta_{x,z}v d\bar{x}
\notag \\
= & \int_{\cO}\left[ (v,\g) \Delta_{x,z}v  +w(v)\pd_z \Delta_{x,z} v\right] \Delta_{x,z}v d\bar{x}
+\Sigma(v) = \Sigma(v).
\end{align*}
Here
\begin{align}\label{Delta2-nl-terms2}
\Sigma(v)
 & \sim\sum_{k,i,j} \int_{\cO}\left[ D^2 v^i D^{1} v^j D^2v^j
 +D^kw(v)D^{2-k}\pd_zv^j \Delta_{x,z}v^j \right]d\bar{x},
\end{align}
where $k,i,j=1,2$, $D^k$ is a differential operator of order $k$, and
the sign $\sim$ means that we deal with linear combination of the terms in the
right hand side.
\par
Using the embeddings $H^1(\cO)\subset L_6(\cO)$  and  $H^{1/2}(\cO)\subset L_3(\cO)$
(see \cite{Triebel78}) and also interpolation
one can see that
\begin{align*}
\Sigma_1 &\equiv\left|\int_{\cO} D^2 v^i D^{1} v^j \Delta_{x,z}v^j
d\bar{x}\right| \le C \|  D^2 v_i\| \|D^{1} v_j\|_{L_6} \|D^2v_j\|_{L_3} \\
&\le C \|v\|_{H^2}^2 \|D^2v^j\|^{1/2} \|D^2v^j\|^{1/2}_{H^1}
\le \eps \|[-\Delta_{x,z}]^{3/2}v\|^2 +C_\eps \|v\|_{H^2}^{10/3}
\end{align*}
To estimate the second term in (\ref{Delta2-nl-terms2}) we start with the case $k=2$:
\begin{align*}
\int_{\cO}D^2w(v)\pd_zv^j \Delta_{x,z}v^j d\bar{x} &\le
C \|D^2w(v)\| \|\pd_z v^j\|_{L_6} \|D^2v^j\|_{L_3} \\
&\le C \|v \|_{H^3}^{3/2} \|\pd_z v\|_{L_6} \|v\|^{1/2}_{H^2} \\
& \le \eps \|[-\Delta_{x,z}]^{3/2}v\|^2 +C_\eps \|\pd_z v\|_{L_6}^4\|v\|_{H^2}^{2}.
\end{align*}
To estimate the term with $k=1$ in (\ref{Delta2-nl-terms2}) we use the following lemma
(see Proposition~2.2 in \cite{CaoTiti03}).
\begin{lemma}\label{le:titi}
For smooth functions $\phi$ and $\psi$ and vector field $v=(v^1;v^2)$ we have
\[
\left|\int_\cO\left[\int_0^{z} \di v(x,\xi) d\xi\right]\psi \phi d\bar{x}\right|
\le C \|v\|_{H^2}^{1/2} \|v\|_{H^1}^{1/2} \|\psi\|_{H^1}^{1/2} \|\psi\|^{1/2} \|\phi\|.
\]
\end{lemma}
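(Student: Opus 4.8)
The plan is to integrate out the vertical variable, reducing the estimate to two planar bounds on $\T^2$, each of which is an instance of the Ladyzhenskaya inequality $\|g\|_{L_4(\T^2)}^2\le C\|g\|_{L_2(\T^2)}\|g\|_{H^1(\T^2)}$. Since $0$ belongs to the vertical interval $(-L_3/2,L_3/2)$, we have the pointwise bound $\big|\int_0^{z}\di v(x,\xi)\,d\xi\big|\le g(x):=\int_{-L_3/2}^{L_3/2}|\di v(x,\xi)|\,d\xi$ for every $(x,z)\in\cO$, and therefore
\[
\Big|\int_\cO\Big[\int_0^{z}\di v(x,\xi)\,d\xi\Big]\psi\,\phi\,d\bar{x}\Big|\le\int_{\T^2}g(x)\,h(x)\,dx,\qquad h(x):=\int_{-L_3/2}^{L_3/2}|\psi(x,z)|\,|\phi(x,z)|\,dz.
\]
The next step is to peel off $\phi$ in $L_2(\cO)$: by the Cauchy--Schwarz inequality in $z$ one gets $h(x)\le\Psi(x)^{1/2}\Phi(x)^{1/2}$ with $\Psi(x)=\int_{-L_3/2}^{L_3/2}|\psi(x,z)|^2dz$ and $\Phi(x)=\int_{-L_3/2}^{L_3/2}|\phi(x,z)|^2dz$, and then Cauchy--Schwarz in $x$ gives
\[
\int_{\T^2}g\,h\,dx\le\Big(\int_{\T^2}g^2\,\Psi\,dx\Big)^{1/2}\Big(\int_{\T^2}\Phi\,dx\Big)^{1/2}\le\|g\|_{L_4(\T^2)}\,\|\Psi\|_{L_2(\T^2)}^{1/2}\,\|\phi\|,
\]
because $\int_{\T^2}\Phi\,dx=\|\phi\|^2$ and $\int_{\T^2}g^2\Psi\,dx\le\|g\|_{L_4(\T^2)}^2\|\Psi\|_{L_2(\T^2)}$.

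It then remains to control the two planar quantities. For $g$, the definition gives $\|g\|_{L_2(\T^2)}^2\le L_3\|\di v\|^2\le C\|v\|_{H^1}^2$ and, differentiating under the integral sign (using $|\g|\di v||\le|\g\,\di v|$), $\|\g g\|_{L_2(\T^2)}^2\le L_3\|\g\,\di v\|^2\le C\|v\|_{H^2}^2$; the Ladyzhenskaya inequality yields $\|g\|_{L_4(\T^2)}\le C\|v\|_{H^1}^{1/2}\|v\|_{H^2}^{1/2}$. For $\Psi$, put $\widetilde\psi(x)=\Psi(x)^{1/2}$, so that $\|\widetilde\psi\|_{L_2(\T^2)}=\|\psi\|$ and, since $|\g\widetilde\psi(x)|\le\big(\int_{-L_3/2}^{L_3/2}|\g\psi(x,z)|^2dz\big)^{1/2}$, also $\|\g\widetilde\psi\|_{L_2(\T^2)}\le\|\g\psi\|\le C\|\psi\|_{H^1}$; hence $\|\Psi\|_{L_2(\T^2)}=\|\widetilde\psi\|_{L_4(\T^2)}^2\le C\|\psi\|\,\|\psi\|_{H^1}$. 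Substituting these two estimates into the previous display gives
\[
\Big|\int_\cO\Big[\int_0^{z}\di v(x,\xi)\,d\xi\Big]\psi\,\phi\,d\bar{x}\Big|\le C\,\|v\|_{H^2}^{1/2}\|v\|_{H^1}^{1/2}\,\|\psi\|_{H^1}^{1/2}\|\psi\|^{1/2}\,\|\phi\|,
\]
which is the claim.

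The only mildly delicate point is differentiating the functions $g$ and $\widetilde\psi$, which are built from absolute values; this is handled by the standard chain rule for Sobolev functions, $|\g|w||\le|\g w|$ a.e., together with the remark that it suffices to prove the inequality for smooth $v,\psi,\phi$ and then pass to the limit. I do not expect any genuine obstacle: the computation above is essentially a transcription of the argument used for the primitive equations in \cite{CaoTiti,CaoTiti03}, the one nontrivial ingredient being the two applications of the two-dimensional Ladyzhenskaya inequality after the vertical variable has been integrated out.
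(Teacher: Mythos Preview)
Your argument is correct and is precisely the Cao--Titi proof: the paper does not give its own proof of this lemma but simply cites Proposition~2.2 in \cite{CaoTiti03}, and what you have written is exactly that argument (integrate out $z$, reduce to two planar $L_4$ bounds, apply the 2D Ladyzhenskaya inequality to the vertical averages of $|\di v|$ and of $|\psi|^2$). There is nothing to add.
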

This lemma yields the following estimate
\begin{align*}
\int_{\cO}D^1w(v) D^1\pd_zv^j \Delta_{x,z}v^j d\bar{x} &\le
C \|v\|_{H^2}^{1/2} \| v\|_{H^3}^{1/2}\|\pd_z v^j\|_{H^2}^{1/2} \|\pd_z v^j\|_{H^1}^{1/2} \|D^2v\| \\
&\le C \|v \|_{H^3} \|v\|^{2}_{H^2}
 \le \eps \|[-\Delta_{x,z}]^{3/2}v\|^2 +C_\eps \|v\|_{H^2}^{4}.
\end{align*}
Thus we arrive at the following relation
\begin{align}\label{Delta2-est-f}
 \frac{d}{dt} \|\Delta_{x,z}v \|^2 +
\nu  \|[-\Delta_{x,z}]^{3/2}v\|^2
\le &
 C_1 \left[ 1+  \|\Delta_{x,z}v \|^2  +\|\pd_z v\|^4_{L_6}\right]  \|\Delta_{x,z}v \|^2
 \notag \\ & + C_2 \|G_f\|_{V_1}^2.
\end{align}
Our next step is an estimate for $\|\pd_z v\|_{L_6}$.
As above let $u=\pd_zv$.
We multiply (\ref{vz-eq}) by $|u|^4u$. Using
the relations
 $u^{\perp} u =0$ and
\[
 \int_{\cO}\left\{(v,\g)u  -\left[\int_0^{z} \di v d\xi \right]\pd_z u\right\} |u|^4u d\bar{x}=0,
\]
we obtain that
\begin{multline*}
\frac16\dfrac{d}{dt}\|u\|^6_{L_6}+\nu\int_\cO\big[ |\g u|^2+|\pd_z u|^2\big]
|u|^4 d\bar{x} \\
\le \int_{\cO}\left\{  - (u,\g)v  +(\g,v) u\right\} |u|^4u d\bar{x}
 +(\pd_zG_f, |u|^4u).
\end{multline*}
In the same way as in \cite{petcu,temam2008-srw} after integration by parts
we have that
\begin{align*}
  \int_{\cO}\left\{  - (u,\g)v  +(\g,v) u\right\} |u|^4u d\bar{x} &\le
  C\|v\|_{L_\infty}  \int_{\cO}|\g u|  |u|^5 d\bar{x} \\
& \le   C\|v\|_{L_\infty}  \left[ \int_{\cO} |\g u |^2  |u|^4 d\bar{x} \right]^{1/2} \| u\|^3_{L_6}
\\ &
\le \eps \int_{\cO} |\g u |^2  |u|^4 d\bar{x} + C_\eps \|v\|_{L_\infty}^2 \| u\|^6_{L_6}.
\end{align*}
This implies that
\begin{multline*}
\dfrac{d}{dt}\|u\|^6_{L_6}+\nu\int_\cO\big[ |\g u|^2+|\pd_z u|^2\big]
|u|^4 d\bar{x} \\
\le C\left[ \|v\|^{2}_{H^2} \|u\|^6_{L_6} +
 \|\pd_zG_f\|_{L_6}\|u\|^5_{L_6}\right].
\end{multline*}
Thus we have that
\[
\dfrac{d}{dt}\|u\|^2_{L_6}
\le C_1\left[1+ \|\Delta_{x,z}v \|^2\right] \|u\|^2_{L_6} +
 C_2\|\pd_zG_f\|_{L_6}^2.
\]
Using Lemma~\ref{gronwall} and the estimate in \eqref{vz-est2} we obtain that
\[
\|\pd_z{v}(t)\|_{L_6} \le R_5~~~\mbox{for all}~~ t\ge t^{(5)}_R\ge t^{(4)}_R ~~~\mbox{with}~~
\|v(0)\|\le R.
\]
Now returning to \eqref{Delta2-est-f} and applying
 Lemma~\ref{gronwall} allow us to obtain the following assertion.
\begin{proposition}\label{pr:dis}
Let $G_f\in V_1$ and $\pd_zG_f\in L_6(\cO)$. Then
there exists $R_*>0$ such that the ball $\sB=\{ v\in V_2: \|v\|_{V_2}\le R_*\}$ is absorbing
for the dynamical system $(\tilde S_t,V_1)$
generated by problem (\ref{fl.1-main}).
Moreover,  there is a forward invariant  absorbing set $\sD$ in this ball.
\end{proposition}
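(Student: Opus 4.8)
The plan is to convert the differential inequality \eqref{Delta2-est-f} into a uniform $V_2$-bound by one more application of the uniform Gronwall Lemma~\ref{gronwall}, feeding in the a priori estimates already assembled in Section~\ref{sect3.1} and above, and then to carve out a forward invariant absorbing subset of the resulting ball by a routine construction.

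First I would fix $R>0$ and work with an arbitrary trajectory $v(t)=\tilde S_tv_0$ with $\|v_0\|\le R$; by the smooth-approximation remark after Proposition~\ref{pr:titi} it suffices to carry out the manipulations on smooth solutions. I read \eqref{Delta2-est-f} as $\tfrac{d}{dt}y\le gy+h$ with $y=\|\Delta_{x,z}v\|^2=\|v\|_{V_2}^2$, $g=C_1\big[1+\|\Delta_{x,z}v\|^2+\|\pd_zv\|^4_{L_6}\big]$ and $h=C_2\|G_f\|_{V_1}^2$. The three integral hypotheses of Lemma~\ref{gronwall} are checked as follows: the bound $\int_t^{t+1}y\,d\tau\le a_3$ and the $\|\Delta_{x,z}v\|^2$-part of $\int_t^{t+1}g\,d\tau$ both follow from the time-integrated estimate in \eqref{vz-est2} (using $\|\Delta_{x,z}v\|^2\le 2\|\Delta v\|^2+2\|v_{zz}\|^2$), valid for $t\ge t^{(4)}_R$; the $\|\pd_zv\|^4_{L_6}$-part of $\int_t^{t+1}g\,d\tau$ is dominated pointwise for $t\ge t^{(5)}_R$ by $R_5^4$, thanks to the $L_6$-estimate for $u=\pd_zv$ derived just above the proposition; and $\int_t^{t+1}h\,d\tau=C_2\|G_f\|_{V_1}^2$ is trivially finite. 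Crucially, the constants $a_1,a_2,a_3$ so obtained depend only on $\nu$, on $R_4,R_5$ and on $\|G_f\|_{V_1}$, and not on $R$, so Lemma~\ref{gronwall} produces a radius $R_*$ with $R_*^2=(a_3+a_2)e^{a_1}$ independent of $R$, together with a time $t^{(6)}_R\ge t^{(5)}_R+1$ such that $\|v(t)\|_{V_2}\le R_*$ for all $t\ge t^{(6)}_R$ whenever $\|v_0\|\le R$. Since any bounded set in $V_1$ is bounded in $H$, this says precisely that $\sB=\{v\in V_2:\|v\|_{V_2}\le R_*\}$ is absorbing for $(\tilde S_t,V_1)$.

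For the forward invariant absorbing set I would use that an absorbing set is automatically forward invariant after a finite time: there is $t_*\ge 0$ with $\tilde S_t\sB\subset\sB$ for all $t\ge t_*$, and I would set $\sD=\overline{\bigcup_{t\ge t_*}\tilde S_t\sB}$. Then $\sD\subset\sB$ (because $\sB$ is closed and $\tilde S_t\sB\subset\sB$ for $t\ge t_*$), one has $\tilde S_s\sD\subset\sD$ for every $s\ge 0$ (the union over $t\ge t_*$ is mapped into the union over $t\ge t_*+s$, and one passes to the closure using continuity of $\tilde S_s$), and $\sD$ remains absorbing, since for a bounded set $B$ with $\tilde S_tB\subset\sB$ for $t\ge t_B$ one has $\tilde S_tB=\tilde S_{t_*}\tilde S_{t-t_*}B\subset\tilde S_{t_*}\sB\subset\sD$ for all $t\ge t_B+t_*$.

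The one genuinely delicate point --- and the respect in which this improves on the known bounds of \cite{petcu,temam2008-srw} for individual trajectories --- is keeping $R_*$ \emph{uniform}: one must check that the chain of intermediate radii $R_0\le R_1\le\cdots\le R_5$ and of matching entry times $t_R\le t^*_R\le\cdots\le t^{(5)}_R$ propagates in the right order, so that the coefficient $g$ in \eqref{Delta2-est-f} has a uniformly bounded integral over unit time intervals once one has entered the lower-order absorbing balls. This is exactly the bookkeeping arranged in \eqref{h-est2a}--\eqref{vz-est2} and in the $L_6$-estimate for $\pd_zv$, so no further idea is needed to complete the proof.
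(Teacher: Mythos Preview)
Your proposal is correct and follows exactly the approach the paper takes: the paper's own argument for Proposition~\ref{pr:dis} is the single sentence ``Now returning to \eqref{Delta2-est-f} and applying Lemma~\ref{gronwall} allow us to obtain the following assertion,'' and you have simply unpacked this by naming $y,g,h$ and verifying the three integral hypotheses via \eqref{vz-est2} and the $L_6$-bound on $\pd_zv$. Your explicit construction of the forward invariant absorbing set $\sD$ is a standard detail the paper omits entirely; it is correct (noting that the $V_2$-ball $\sB$ is closed in $V_1$ by the compact embedding $V_2\hookrightarrow V_1$ and weak lower semicontinuity of the $V_2$-norm).
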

Thus the proof of Theorem~\ref{th-prem1}
(in the case $b_0\equiv 0$ and $G_b\equiv 0$)
is complete.

\subsection{Squeezing estimate}
Now we prove Theorem~\ref{th-main1} (in the case $b_0\equiv 0$ and $G_b\equiv 0$).
We start with the following assertion.
\begin{proposition}\label{pr:lip}
For any $v_*,v_{**}$ from the absorbing forward invariant set $\sD$ we have
the relation
\[
\|\tilde S_tv_{*}-\tilde S_tv_{**}\|_{V_1}\le C_{\sD}e^{\alpha_{\sD}t}\|v_{*}-v_{**}\|_{V_1},~~~\forall\, t\ge 0,
\]
where $C_{\sD}$ and $\alpha_{\sD}$ are positive constants.
Moreover, for $v_*(t)=\tilde S_tv_*$ and $v_{**}(t)=\tilde S_tv_{**}$
we also have that
\begin{equation*}
\|u(t)\|_{V_1}^2+\int_0^t\|u(\tau)\|_{V_2}^2d\tau\le C_{\sD}e^{\alpha_{\sD}t}\|v_*-v_{**}\|^2_{V_1},
~~~\forall\, t\ge 0,
\end{equation*}
where $u(t)=v_*(t)-v_{**}(t)$.
\end{proposition}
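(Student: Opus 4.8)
The plan is to run a $V_1$ energy estimate on the difference of the two trajectories and to close it by Gronwall's lemma, exploiting that the forward invariant absorbing set $\sD$ is bounded in $V_2$ (Theorem~\ref{th-prem1}). I would set $v_*(t)=\tilde S_tv_*$, $v_{**}(t)=\tilde S_tv_{**}$, $u=v_*-v_{**}$ and subtract the two copies of \eqref{fl.1-main}; since $w(v)=-\int_0^z\di v\,d\xi$ is linear in $v$, this gives
\[
u_t +(v_*,\g)u +(u,\g)v_{**} -\Big[\int_0^z\di v_*\,d\xi\Big]\pd_z u -\Big[\int_0^z\di u\,d\xi\Big]\pd_z v_{**} -\nu[\Delta u+\pd_{zz}u] +f u^{\perp} = -\g q ,
\]
where $q$ depends only on the horizontal variable $x$ and $u$ inherits the constraints \eqref{fl.4a-bc1}; in particular $u$ has zero mean over $\cO$, so $\|u\|_{H^1}$ and $\|u\|_{H^2}$ are controlled by $\|u\|_{V_1}$ and $\|u\|_{V_2}$. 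As in Section~\ref{sect3.1} and in the proof of Proposition~\ref{pr:dis}, I would carry out the computation on smooth solutions and pass to the limit; this is admissible because, by Proposition~\ref{pr:titi} together with the a priori bounds of Theorem~\ref{th-prem1}, trajectories issued from $\sD$ lie in $C(\R_+;V_2)\cap L_2(0,T;V_3)$ and satisfy $\sup_{t\ge0}\big(\|v_*(t)\|_{V_2}+\|v_{**}(t)\|_{V_2}\big)\le C_\sD$.

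Next I would multiply the difference equation by $-\Delta_{x,z}u$ in $H$ and integrate by parts to obtain
\[
\hf\frac{d}{dt}\|u\|_{V_1}^2 +\nu\|u\|_{V_2}^2 = -(T_1+T_2+T_3+T_4) ,
\]
where $T_1=\int_\cO (v_*,\g)u\cdot\Delta_{x,z}u\,d\bar x$, $T_2=\int_\cO (u,\g)v_{**}\cdot\Delta_{x,z}u\,d\bar x$, $T_3=\int_\cO\big[\int_0^z\di v_*\,d\xi\big]\pd_z u\cdot\Delta_{x,z}u\,d\bar x$ and $T_4=\int_\cO\big[\int_0^z\di u\,d\xi\big]\pd_z v_{**}\cdot\Delta_{x,z}u\,d\bar x$. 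Here the Coriolis and pressure terms drop out, $\int_\cO u^{\perp}\cdot\Delta_{x,z}u\,d\bar x=\int_\cO[\Delta_{x,z}u]^{\perp}\cdot\Delta_{x,z}u\,d\bar x=0$ and $\int_\cO\g q\cdot\Delta_{x,z}u\,d\bar x=0$, the latter because $q$ is $z$-independent and $\di\int_{-L_3/2}^{L_3/2}u\,dz=0$ by \eqref{fl.4a-bc1} — exactly the cancellations already used after \eqref{Delta2-est}.

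The core of the argument is to bound each $T_j$ by $\eps\|u\|_{V_2}^2+C(\eps,\sD)\|u\|_{V_1}^2$, using only the $V_2$-bound of $v_*,v_{**}$ valid on $\sD$. For $T_1$ I would use $H^2(\cO)\hookrightarrow L_\infty(\cO)$: $|T_1|\le\|v_*\|_{L_\infty}\|\g u\|\,\|u\|_{V_2}\le\eps\|u\|_{V_2}^2+C_\eps C_\sD^2\|u\|_{V_1}^2$. For $T_2$ I would use $H^1(\cO)\hookrightarrow L_6(\cO)$, $H^{1/2}(\cO)\hookrightarrow L_3(\cO)$ and interpolation: $|T_2|\le\|u\|_{L_6}\|\g v_{**}\|_{L_3}\|u\|_{V_2}\le C\|u\|_{V_1}\|v_{**}\|_{V_2}^{1/2}\|v_{**}\|_{V_1}^{1/2}\|u\|_{V_2}\le\eps\|u\|_{V_2}^2+C_\eps C_\sD^2\|u\|_{V_1}^2$. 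The genuinely delicate terms are $T_3,T_4$, which carry the non-local vertical velocity; I would handle both with Lemma~\ref{le:titi}. Applied component-wise to $T_3$ (field $v_*$, $\psi$ a component of $\pd_z u$, $\phi$ a component of $\Delta_{x,z}u$) it gives $|T_3|\le C\|v_*\|_{H^2}^{1/2}\|v_*\|_{H^1}^{1/2}\|\pd_z u\|_{H^1}^{1/2}\|\pd_z u\|^{1/2}\|\Delta_{x,z}u\|\le C_\sD\|u\|_{V_2}^{3/2}\|u\|_{V_1}^{1/2}$, and Young's inequality with exponents $4/3$ and $4$ turns this into $\eps\|u\|_{V_2}^2+C_\eps C_\sD^4\|u\|_{V_1}^2$; applied to $T_4$ (field $u$, $\psi$ a component of $\pd_z v_{**}$) it gives $|T_4|\le C\|u\|_{H^2}^{1/2}\|u\|_{H^1}^{1/2}\|\pd_z v_{**}\|_{H^1}^{1/2}\|\pd_z v_{**}\|^{1/2}\|\Delta_{x,z}u\|\le C_\sD\|u\|_{V_2}^{3/2}\|u\|_{V_1}^{1/2}$, treated identically.

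Summing the four bounds and choosing $\eps$ with $4\eps<\nu/2$ I would arrive at $\frac{d}{dt}\|u\|_{V_1}^2+\nu\|u\|_{V_2}^2\le 2C(\sD)\|u\|_{V_1}^2$ for $t\ge0$; dropping $\nu\|u\|_{V_2}^2$ and applying Gronwall's lemma yields $\|u(t)\|_{V_1}^2\le e^{2C(\sD)t}\|v_*-v_{**}\|_{V_1}^2$, i.e.\ the Lipschitz estimate with $C_\sD=1$ and $\alpha_\sD=C(\sD)$, while integrating the full inequality over $(0,t)$ and inserting this bound gives $\|u(t)\|_{V_1}^2+\nu\int_0^t\|u(\tau)\|_{V_2}^2\,d\tau\le C_\sD e^{\alpha_\sD t}\|v_*-v_{**}\|_{V_1}^2$ after relabelling constants. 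I expect the hard part to be the treatment of $T_3$ and $T_4$: the vertical velocity $w(v)=-\int_0^z\di v\,d\xi$ is the structural obstruction of the primitive equations, and Lemma~\ref{le:titi} must be invoked precisely so that only the $V_2$-norms of $v_*,v_{**}$ — uniformly bounded on $\sD$ — and no uncontrolled higher norm occur as coefficients, since otherwise the Gronwall step would not close uniformly over $\sD$. A secondary point needing care is the rigorous justification of the $-\Delta_{x,z}u$ multiplier, which rests on the $L_2(0,T;V_3)$ regularity of the trajectories supplied by Proposition~\ref{pr:titi} and the estimates in the proof of Theorem~\ref{th-prem1}.
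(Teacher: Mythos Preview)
Your proposal is correct and follows essentially the same route as the paper: write the difference equation, test with $-\Delta_{x,z}u$, bound the four nonlinear terms using the uniform $V_2$ bound on $\sD$, and close by Gronwall. The paper's own proof is deliberately terse (it just writes the difference equation \eqref{fl.1-main-dif} and defers the estimates to \cite{ju}), whereas you have spelled out the term-by-term bounds using Lemma~\ref{le:titi} and the Sobolev embeddings already invoked in Section~\ref{sect:prf}; your bilinear splitting $(v_*,\g)u+(u,\g)v_{**}$ and $w(v_*)\pd_z u+w(u)\pd_z v_{**}$ is a harmless variant of the one in \eqref{fl.1-main-dif}. One cosmetic slip: the identity you wrote for the Coriolis term, $\int_\cO u^{\perp}\!\cdot\Delta_{x,z}u\,d\bar x=\int_\cO[\Delta_{x,z}u]^{\perp}\!\cdot\Delta_{x,z}u\,d\bar x$, is not literally the right intermediate step (the analogue of the computation after \eqref{Delta2-est} would rather give $\sum_k\int_\cO[\pd_k u]^{\perp}\!\cdot\pd_k u\,d\bar x=0$ after one integration by parts), but the vanishing conclusion is of course correct.
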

\begin{proof}
We use the same type of argument as in \cite{ju}. For this we write the equation
  \begin{multline}\label{fl.1-main-dif}
   u_t   -\nu [\Delta u +\pd_{zz} u]+(u,\g)v_{*}  + (v_{*},\g)u
\\
    -\left[\int_0^{z} \di u d\xi \right]\pd_z v_{*}
   -\left[\int_0^{z} \di v_{**} d\xi \right]\pd_z u
+f u^{\perp} = -\nabla  (p_*-p_{**})
 \end{multline}
 for the difference $u(t)=v_*(t)-v_{**}(t)=S_tv_*-S_tv_{**}$.
 Then
  we multiply \eqref{fl.1-main-dif} by $(-\Delta_{x,z})u$ and apply the same argument as in \cite{ju}.
 At the final stage we use the fact that
\[
\| v_*(t)\|_{V_2}+\|v_{**}(t)\|_{V_2}\le C_\sD,~~~t>0,
 \]
 for initial data $v_*$ and $v_{**}$ from $\sD$.
\end{proof}
\par
Let $\{\la_k\}$ be the eigenvalues of the corresponding Stokes type operator $A$
which arises in problem \eqref{fl.1-main}
and $\{e_k\}$ be the corresponding eigenvectors. We can assume that
\[
0<\la_1\le\la_2\le \ldots;~~~\lim_{k\to\infty}\la_k=\infty.
\]
We denote by $P_N$ the orthoprojector in $H$ on Span$\{e_1,\ldots,e_N\}$.
Let $Q_N=I-P_N$. We multiply (\ref{fl.1-main-dif}) by $Q_NAu$ and obtain that

\begin{align}\label{N-est}
 \hf \frac{d}{dt} \|A^{1/2} Q_Nu \|^2 + &
\nu  \|A Q_Nu\|^2
=
-([(u,\g)v_{*}  + (v_{**},\g)u],  Q_NAu)
\notag \\ &
    + \int_{\cO}\left[\int_0^{z} \di u d\xi \right]\pd_z v_{*} Q_NAud\bar{x}-f (u^{\perp},Q_NAu)
 \notag \\ &  +\int_{\cO} \left[\int_0^{z} \di v_{**} d\xi \right]\pd_z u Q_NAud\bar{x}.
\end{align}
Now we need the following lemma which follows from the
H\"older inequality and from the standard Sobolev embedding (see, e.g., \cite{Triebel78}):
\[
H^s(\cO)\subset L_p(\cO)~~~\mbox{when}~~~ s-3/2= -3/p,~~~p\ge 2,~~\cO\subset\R^3.
\]
\begin{lemma}\label{le:tambaca}
Let $\cO\subset\R^3$ and $u$, $v$ be sufficiently smooth functions.
Then
\[
\|uv\|_{L_2}\le C \|u\|_{H^s} \|v\|_{H^r}
\]
for any $0<s,r<3/2$. such that $r+s=3/2$.
\end{lemma}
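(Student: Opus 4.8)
The plan is to reduce this bilinear bound to a single application of H\"older's inequality in $L_p$ spaces followed by the critical Sobolev embeddings quoted just above the statement. First I would fix $s,r$ with $0<s,r<3/2$ and $r+s=3/2$, and choose the H\"older exponents
\[
p=\frac{6}{3-2s},\qquad p'=\frac{6}{3-2r}.
\]
Since $0<s<3/2$ forces $0<3-2s<3$, we get $p\in(2,\infty)$, and likewise $p'\in(2,\infty)$; in particular both exponents satisfy the restriction $p\ge 2$ under which the embedding $H^\sigma(\cO)\subset L_p(\cO)$, $\sigma-3/2=-3/p$, was stated. The one arithmetic check needed is
\[
\frac1p+\frac1{p'}=\frac{3-2s}{6}+\frac{3-2r}{6}=\frac{6-2(r+s)}{6}=\frac12,
\]
so that $(p,p')$ is an admissible H\"older pair for the target space $L_2$.

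Next I would invoke H\"older's inequality to obtain $\|uv\|_{L_2}\le \|u\|_{L_p}\|v\|_{L_{p'}}$, and then apply the quoted Sobolev embedding twice: writing $-3/p=-(3-2s)/2=s-3/2$ shows that $H^s(\cO)$ embeds into $L_p(\cO)$ with $\|u\|_{L_p}\le C\|u\|_{H^s}$, and analogously $\|v\|_{L_{p'}}\le C\|v\|_{H^r}$ because $-3/p'=r-3/2$. Combining the three inequalities yields $\|uv\|_{L_2}\le C\|u\|_{H^s}\|v\|_{H^r}$ with a constant depending only on $\cO$, $s$, $r$, which is the claim.

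There is essentially no genuine obstacle here: the statement is a borderline Sobolev product estimate and the entire content is the verification that the endpoint exponents balance, which the identity $r+s=3/2$ guarantees. The only point deserving a word of care is that the critical embedding $H^\sigma\subset L_p$ with $\sigma-3/2=-3/p$ must be used at a value $\sigma<3/2$, so that $p<\infty$ and the embedding is valid on the bounded periodic domain; this is precisely why the hypotheses exclude $s=3/2$ and $r=3/2$. For smooth $u,v$ on the periodic box one could, if desired, first prove the bound for trigonometric polynomials and pass to the limit, but since the lemma is quoted only for sufficiently smooth functions even that density step is unnecessary.
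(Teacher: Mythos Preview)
Your proposal is correct and follows exactly the route indicated in the paper: H\"older's inequality with exponents $p=6/(3-2s)$, $p'=6/(3-2r)$ satisfying $1/p+1/p'=1/2$, followed by the critical Sobolev embedding $H^\sigma(\cO)\subset L_{6/(3-2\sigma)}(\cO)$ applied to each factor. There is nothing to add.
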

Using this lemma to estimate the right hand side in (\ref{N-est})
on the forward invariant set $\sD$
 we can find that
\begin{align*}
 \hf \frac{d}{dt} \|A^{1/2} Q_Nu \|^2 + &
\nu  \|A Q_Nu\|^2
\le C_{R_*}\| u\|_{H^{3/2}} \|A Q_Nu\|,
\end{align*}
where $R_*$ is the same as in the statement of Proposition~\ref{pr:dis}.
This yields
\begin{align*}
 \frac{d}{dt} \|A^{1/2} Q_Nu \|^2 + &
\nu  \|A Q_Nu\|^2
\le C_{R_*,\eps}\| u\|^2_{V_1} + \eps\| u\|^2_{V_2}
\end{align*}
for any $\eps>0$. This implies that
\begin{multline*}
 \|A^{1/2} Q_Nu(t) \|^2 \le  e^{-\nu\la_{N+1}t} \|A^{1/2} Q_Nu(0) \|^2
\\ +C_{R_*,\eps}\int_0^te^{-\nu\la_{N+1}(t-\tau)}
 \| u\|^2_{V_1}d\tau + \eps\int_0^te^{-\nu\la_{N+1}(t-\tau)}\| u\|^2_{V_2} d\tau.
\end{multline*}
Using Proposition~\ref{pr:lip} we obtain that
\begin{align*}
 \|A^{1/2} Q_Nu(t) \|^2 \le & \Big[ e^{-\nu\la_{N+1}t} +\left(\frac{C_{R_*,\eps}}{\al_{R_*}+\nu\la_{N+1}}
 + \eps C_{R_*}\right) e^{\al_{R_*} t}\Big]  \|A^{1/2}u(0) \|^2.
\end{align*}
Now for any fixed $T>0$ and  $0<q<1$ we can choose $\eps$ and $N$ such that
\begin{align*}
 \|A^{1/2} Q_Nu(T) \|^2 \le  q^2  \|A^{1/2}u(0) \|^2.
\end{align*}
Thus we arrive to the squeezing property:
For every $T>0$ and  $0<q<1$ there is $N$ such that
\[
\|Q_N[\tilde  S_Tv_1-\tilde  S_Tv_2]\|_{V_1}\le q \|v_1-v_2\|_{V_1}
\]
for any $v_1,v_2$ from the absorbing forward invariant set $\sD$.

\appendix
\section{Appendix: Kuksin-Shirikyan ergodicity  \\ theorem}
In this Appendix for the readers convenience we state   Theorem~3.2.5~\cite{KS-book}
on the ergodicity of a class of Markov chains.
\par
Let $H$ be a separable Banach space. We denote by  $B(R)$  the ball with the center at $0$ of the radius $R$
in the space $H$. Let $S$ be a   continuous (nonlinear) operator from $H$ into itself.
Let $\{\eta_k\}$ be i.i.d.\ random variables in $H$
such that $\|\eta_k\|\le r_*$ for some $r_*>0$.
We consider a Markov chain  in $H$ defined by the relations
\begin{equation}\label{chain-app}
U_k=F_k(U_{k-1})\equiv SU_{k-1}+\eta_k,~~ k=1,2,\ldots
\end{equation}
We assume that the following  hypotheses  are satisfied.
\begin{enumerate}
 \item[{\bf(A1)}]  $S$ is a Lipschitz mapping on every ball  $B(\varrho)$ and for any $R>r>0$
 there exists $n_*=n(R,r)$ such that   $S^nB(R)\subset B(r)$ for all $n\ge n_*$.
  \item[{\bf(A2)}]
 There exists  $\widehat{R}> 0$  such that for every $R>0$ we have that
 $\cA_k(R)\subset   B(\widehat{R})$ for all  $k\ge k_*(R)$, where the sets $\cA_k(R)$ is defined by the relations
 \[
  \cA_0(R)=B(R),~~~   \cA_k(R)=S (\cA_{k-1}(R))+ B(r_*),~~ k=1,2,\ldots
 \]
 \item[{\bf(A3)}]
 There exist a finite dimensional projector $P$ in $H$
and  a number $\eta<1$ such that
 \[
\|(I-P)\big[S U-S U_*\big]\|_{H}\le  \eta \|U-U_*\|_{H},~~\forall\,
U,U_*\in {\rm Cl}\big[ \cup_{k\ge 0}\cA_k(\widehat{R})\big].
\]
\item[{\bf(A4)}]
 Assume that $P\eta_1$ and  $(I-P)\eta_1$
are independent and the distribution of $P\eta_1$ in the (finite-dimensional)
space $P H$ has a density $p(U)$  with respect to the Lebesgue measure
$dU$ on $PH$ such that
\[
\int_{PH}|p(U+V)-p(U)| dU\le C\|V\|_{H},~~~\forall V\in PH,
\]
where $C$ is a constant.
 \end{enumerate}
We also recall (see, e.g., \cite{KS-book}) that
 the  dual Lipschitz norm  in the space of probability Borel measures  on $H$ is defined by the relation
\[
 \|\mu_1-\mu_2\|^*_L=\sup\left\{ \left| \int_Hf(v)\mu_1(dv)-\int_Hf(v)\mu_2(dv)\right| :
\begin{array}{l}
   f\in L_b(H), \\ \|f\|_L\le1
  \end{array}\right\},
\]
where $L_b(H)$ is the space of bounded  Lipschitz functions on $H$
endowed with the norm
\[
 \|f\|_L= \sup_{v\in H}|f(v)|+\sup_{v_1,v_2\in H}\frac{|f(v_1)-f(v_2)|}{\|v_1-v_2\|_H}.
\]
Now we are in position to state  Theorem~3.2.5\cite{KS-book}.

\begin{theorem}[Ergodicity; Kuksin-Shirikyan \cite{KS-book}]\label{th:app}
If we assume that hy\-po\-theses {\bf(A1)}--{\bf(A4)} are in force,
 then  for the chain (\ref{chain-app})
 there is a unique invariant probability Borel measure $\mu$ on $H$.
 Moreover, there is $0<\gamma<1$ such that
\[
\| F_k^*\lambda-\mu\|^*_L\le C e^{-\al k}\int \exp\{\al k_*(\|U\|)\}\la (dU),
 ~~ k=1,2,\ldots
\]
for any probability Borel measure $\la$ on $H$ for which the integral in the right-hand side
of the relation above is finite (the function $k_*(R)$ is defined in {\bf(A2)}).
Here above  the probability  measure  $F_k^*\lambda$  is defined
on a Borel set $\Gamma$ by the formula
\[
(F_k^*\lambda)(\Gamma)=\int_H P_k(V,\Gamma) \la(dV),
\]
where $P_k(U_0,\Gamma)=\p (U_k\in\Gamma)$ is the transition probability for the chain.
\end{theorem}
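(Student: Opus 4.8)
The plan is to prove the theorem by the coupling method, as in \cite{KS-book}; I only sketch the main steps. First I would use {\bf(A1)}--{\bf(A2)} to reduce to a bounded absorbing region: every ball $B(R)$ is carried into the fixed ball $B(\widehat R)$ after $k_*(R)$ steps, $\cA_j(\widehat R)\subset B(\widehat R)$ for $j\ge 1$, and hence $\cK={\rm Cl}\big[\cup_{j\ge0}\cA_j(\widehat R)\big]$ is bounded and forward invariant, with every trajectory started in $B(\widehat R)$ remaining in $\cK$. The factor $\int\exp\{\al k_*(\|U\|)\}\,\la(dU)$ in the conclusion is exactly the cost of waiting for this absorption, so it suffices to establish an exponential bound, in the dual Lipschitz metric $\|\cdot\|^*_L$, for two copies of the chain whose initial data lie in $B(\widehat R)$.

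Next I would construct the coupling. Using {\bf(A4)} split each kick as $\eta_k=P\eta_k+(I-P)\eta_k=\zeta_k+\xi_k$ with $\zeta_k,\xi_k$ independent and $\zeta_k$ carrying the density $p$ on $PH$. Given two copies $(U_k)$, $(U'_k)$ with $U_0,U'_0\in B(\widehat R)$, couple them so that the $(I-P)$-noises coincide, $\xi_k=\xi'_k$, while the $P$-noises realize a maximal coupling of the laws of $PSU_{k-1}+\zeta_k$ and $PSU'_{k-1}+\zeta'_k$. Since $S$ is Lipschitz on $B(\widehat R)$ by {\bf(A1)}, say with constant $L$, we have $\|PSU_{k-1}-PSU'_{k-1}\|\le L\|U_{k-1}-U'_{k-1}\|$, and combined with the regularity of $p$ from {\bf(A4)} this bounds the total variation distance of the two shifted finite dimensional laws by $CL\|U_{k-1}-U'_{k-1}\|$; hence the maximal coupling produces the event $\{PU_k=PU'_k\}$ with conditional probability at least $1-CL\|U_{k-1}-U'_{k-1}\|$.

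Then I would run the contraction step. On $\{PU_k=PU'_k\}$ the finite dimensional parts agree and the $(I-P)$-noises cancel, so $U_k-U'_k=(I-P)(SU_{k-1}-SU'_{k-1})$, and {\bf(A3)} gives $\|U_k-U'_k\|\le\eta\|U_{k-1}-U'_{k-1}\|$ with $\eta<1$; on the complement $\|U_k-U'_k\|$ is merely bounded by the (finite) diameter of $\cK$. Writing $d_k=\|U_k-U'_k\|$ and letting $\Omega_k$ be the event that every gluing at steps $1,\dots,k$ succeeds, the coupling step gives $d_k\le\eta^kd_0$ on $\Omega_k$ while $\p(\Omega_k^c)\le CL\sum_{j\le k}\E d_{j-1}$; a stopping-time/renewal analysis exploiting the kick structure --- the technical core of \cite[Theorem~3.2.5]{KS-book} --- combines these facts into a bound $\E d_k\le C\gamma^k$ with $\gamma\in(0,1)$, uniform over $U_0,U'_0\in B(\widehat R)$, and hence $\|F_k^*\nu_1-F_k^*\nu_2\|^*_L\le C\gamma^k$ for all probability Borel measures $\nu_1,\nu_2$ supported in $B(\widehat R)$. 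Applying this with $\nu_1=F_m^*\delta_{U_0}$ and $\nu_2=\delta_{U_0}$ makes $(F_k^*\delta_{U_0})_k$ Cauchy in $\|\cdot\|^*_L$; its limit $\mu$ is invariant, is independent of $U_0$ by the uniform bound and is the unique invariant measure in this class, and re-inserting the absorption estimate from the first step converts the uniform rate into the stated one.

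The hard part will be the probabilistic estimate of the contraction step: one must control how often the finite dimensional gluing fails and, in particular, prevent a late failure from destroying an earlier coincidence of the $PU$-components, which is precisely what forces the stopping-time/renewal analysis rather than a naive geometric iteration. A secondary difficulty, specific to the infinite dimensional setting, is that $B(\widehat R)$ is not compact, so the existence of the invariant measure has to be extracted from the contraction itself (equivalently, from the asymptotic compactness implicit in {\bf(A3)}) rather than from a bare Bogolyubov--Krylov argument.
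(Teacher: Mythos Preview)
The paper does not give its own proof of this theorem: it is merely quoted in the Appendix ``for the readers convenience'' as Theorem~3.2.5 from \cite{KS-book}, and the paper uses it as a black box to derive Corollary~\ref{co:erg}. So there is no in-paper argument to compare against.

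That said, your sketch is a faithful outline of the coupling method actually carried out in \cite{KS-book}: absorption into a bounded invariant region via {\bf(A1)}--{\bf(A2)}, splitting the kicks along $P$ and $I-P$ with a maximal coupling of the finite-dimensional components via {\bf(A4)}, contraction of the $(I-P)$-part on the successful-gluing event via {\bf(A3)}, and a stopping-time/renewal analysis to turn the one-step estimates into exponential mixing in the dual Lipschitz norm. Your identification of the ``hard part'' --- controlling the accumulation of gluing failures --- is exactly where the work lies in \cite{KS-book}, and your remark about extracting existence of $\mu$ from the contraction rather than from compactness is also correct. If you are asked to supply a proof here, the honest thing is to refer to \cite[Theorem~3.2.5]{KS-book}; your sketch would serve well as an informal explanation of why that theorem holds, but the renewal estimate you flag cannot be disposed of in a paragraph.
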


We note that the result in Theorem~\ref{th:app} formally deals with
a Markov chain in a {\em whole} space.
However one can see that the arguments from  \cite{KS-book}
remain true if we restrict the iteration procedure
in (\ref{chain})  on an invariant subset. This observation was already used
in  \cite{CK-arma}  to prove ergodicity
for random kick-forced 3D Navier-Stokes equations in a thin domain and can be
also applied to obtain Corollary \ref{co:erg}.

\subsection*{Acknowledgement}
The paper  was written as an answer to the question posed
by Sergei Kuksin concerning a squeezing property  for
viscous 3D primitive equations. The author
would like to express his gratitude to him and also to Armen Shirikyan
for fruitful discussions.
\par
This research was completed during my 6 weeks visit to the Institute for
Mathematics and its Applications (Minneapolis) in the framework of
2012/2013 Annual Program ``Infinite Dimensional and Stochastic Dynamical Systems
and their Applications" and was supported in part by the Institute
with funds provided by the NSF.

\end{document}